\newtheorem{theorem}{Theorem}[section]
\theoremstyle{plain}
\newtheorem{definition}{Definition}[section]
\newtheorem{lemma}{Lemma}[section]
\newtheorem{proposition}{Proposition}[section]
\newtheorem{remark}{Remark}[section]
\numberwithin{equation}{section}
\begin{document}
\title[Unsteady Stokes type equations]{Two-Scale convergence of \ unsteady
Stokes type equations}
\author{Lazarus SIGNING}
\dedicatory{University of Ngaoundere \\
Department of Mathematics and \\
Computer Science, P.O.Box 454\\
Ngaoundere, Cameroon\\
email: lsigning@uy1.uninet.cm}

\begin{abstract}
In this paper we study the homogenization of unsteady Stokes type equations
in the periodic setting. The usual Laplace operator involved in the
classical Stokes equations is here replaced by a linear elliptic
differential operator of divergence form with periodically oscillating
coefficients. Our mean tool is the well known two-scale convergence method.
\end{abstract}

\subjclass[2000]{ 35B27, 35B40, 73B27, 76D30}
\keywords{Homogenization, Two-scale convergence, Unsteady Stokes type
equations.}
\maketitle

\section{Introduction}

Let $\Omega $ be a smooth bounded open set in $\mathbb{R}_{x}^{N}$ (the $N$%
-numerical space $\mathbb{R}^{N}$ of variables $x=\left(
x_{1},...,x_{N}\right) $), where $N$ is a given positive integer, and let $T$
and $\varepsilon $ be real numbers with $T>0$ and $0<\varepsilon <1$. We
consider the partial differential operator 
\begin{equation*}
P^{\varepsilon }=-\sum_{i,j=1}^{N}\frac{\partial }{\partial x_{i}}\left(
a_{ij}^{\varepsilon }\frac{\partial }{\partial x_{j}}\right)
\end{equation*}%
in $\Omega \times ]0,T[$, where $a_{ij}^{\varepsilon }\left( x\right)
=a_{ij}\left( \frac{x}{\varepsilon }\right) $\quad ($x\in \Omega $), $%
a_{ij}\in L^{\infty }\left( \mathbb{R}_{y}^{N};\mathbb{R}\right) $ ($1\leq
i,j\leq N$) with 
\begin{equation}
a_{ij}=a_{ji}\text{,}  \label{eq1.1}
\end{equation}%
and the assumption that there is a constant $\alpha >0$ such that 
\begin{equation}
\sum_{i,j=1}^{N}a_{ij}\left( y\right) \zeta _{j}\zeta _{i}\geq \alpha
\left\vert \zeta \right\vert ^{2}\text{\ for all }\zeta =\left( \zeta
_{j}\right) \in \mathbb{R}^{N}\text{ and}  \label{eq1.2}
\end{equation}%
for almost all $y\in \mathbb{R}^{N}$, where $\mathbb{R}_{y}^{N}$ is the $N$%
-numerical space $\mathbb{R}^{N}$ of variables $y=\left(
y_{1},...,y_{N}\right) $, and where $\left\vert {\small \cdot }\right\vert $
denotes the Euclidean norm in $\mathbb{R}^{N}$. The operator $P^{\varepsilon
}$ acts on scalar functions, say $\varphi \in L^{2}\left( 0,T;H^{1}\left(
\Omega \right) \right) $. However, we may as well view $P^{\varepsilon }$ as
acting on vector functions $\mathbf{u}=\left( u^{i}\right) \in L^{2}\left(
0,T;H^{1}\left( \Omega \right) ^{N}\right) $ in a \textit{diagonal way},
i.e.,%
\begin{equation*}
\left( P^{\varepsilon }\mathbf{u}\right) ^{i}=P^{\varepsilon }u^{i}\text{%
\qquad }\left( i=1,...,N\right) \text{.}
\end{equation*}%
For any Roman character such as $i$, $j$ (with $1\leq i,j\leq N$), $u^{i}$
(resp. $u^{j}$) denotes the $i$-th (resp. $j$-th) component of a vector
function $\mathbf{u}$ in $L_{loc}^{1}\left( \Omega \times ]0,T[\right) ^{N}$
or in $L_{loc}^{1}\left( \mathbb{R}_{y}^{N}\times \mathbb{R}_{\tau }\right)
^{N}$ where $\mathbb{R}_{\tau }$ is the numerical space $\mathbb{R}$ of
variables $\tau $. Further, for any real $0<\varepsilon <1$, we define $%
u^{\varepsilon }$ as 
\begin{equation*}
u^{\varepsilon }\left( x,t\right) =u\left( \frac{x}{\varepsilon },\frac{t}{%
\varepsilon }\right) \text{\qquad }\left( \left( x,t\right) \in \Omega
\times ]0,T[\right)
\end{equation*}%
for $u\in L_{loc}^{1}\left( \mathbb{R}_{y}^{N}\times \mathbb{R}_{\tau
}\right) $, as is customary in homogenization theory. More generally, for $%
u\in L_{loc}^{1}\left( Q\times \mathbb{R}_{y}^{N}\times \mathbb{R}_{\tau
}\right) $ with $Q=\Omega \times ]0,T[$, it is customary to put%
\begin{equation*}
u^{\varepsilon }\left( x,t\right) =u\left( x,t,\frac{x}{\varepsilon },\frac{t%
}{\varepsilon }\right) \text{\qquad }\left( \left( x,t\right) \in \Omega
\times ]0,T[\right)
\end{equation*}%
whenever the right-hand side makes sense (see, e.g., \cite[9]{bib8}).

After these preliminaries, let $\mathbf{f}=\left( f^{i}\right) \in
L^{2}\left( 0,T;H^{-1}\left( \Omega ;\mathbb{R}\right) ^{N}\right) $. For
any fixed $0<\varepsilon <1$, we consider the initial boundary value problem%
\begin{equation}
\frac{\partial \mathbf{u}_{\varepsilon }}{\partial t}+P^{\varepsilon }%
\mathbf{u}_{\varepsilon }+\mathbf{grad}p_{\varepsilon }=\mathbf{f}\text{ in }%
\Omega \times ]0,T[\text{,}  \label{eq1.3}
\end{equation}%
\begin{equation}
div\mathbf{u}_{\varepsilon }=0\text{ in }\Omega \times ]0,T[\text{,}
\label{eq1.4}
\end{equation}%
\begin{equation}
\mathbf{u}_{\varepsilon }=0\text{ on }\partial \Omega \times ]0,T[\text{,}
\label{eq1.5}
\end{equation}%
\begin{equation}
\mathbf{u}_{\varepsilon }\left( 0\right) =0\text{ in }\Omega \text{.}
\label{eq1.6}
\end{equation}%
We will later see that as in \cite{bib18}, (\ref{eq1.3})-(\ref{eq1.6})
uniquely define $\left( \mathbf{u}_{\varepsilon },p_{\varepsilon }\right) $
with $\mathbf{u}_{\varepsilon }\in \mathcal{W}\left( 0,T\right) $ and $%
p_{\varepsilon }\in L^{2}\left( 0,T;L^{2}\left( \Omega ;\mathbb{R}\right) 
\mathfrak{/}\mathbb{R}\right) $, where 
\begin{equation*}
\mathcal{W}\left( 0,T\right) =\left\{ \mathbf{u}\in L^{2}\left( 0,T;V\right)
:\mathbf{u}^{\prime }\in L^{2}\left( 0,T;V^{\prime }\right) \right\}
\end{equation*}%
$V$ being the space of functions $\mathbf{u}$ in $H_{0}^{1}\left( \Omega ;%
\mathbb{R}\right) ^{N}$ with $div\mathbf{u}=0$ ($V^{\prime }$ is the
topological dual of $V$) and where 
\begin{equation*}
L^{2}\left( \Omega ;\mathbb{R}\right) \mathfrak{/}\mathbb{R}=\left\{ v\in
L^{2}\left( \Omega ;\mathbb{R}\right) :\int_{\Omega }vdx=0\right\} \text{.}
\end{equation*}%
Let us recall that $\mathcal{W}\left( 0,T\right) $ is provided with the norm%
\begin{equation*}
\left\Vert \mathbf{u}\right\Vert _{\mathcal{W}\left( 0,T\right) }=\left(
\left\Vert \mathbf{u}\right\Vert _{L^{2}\left( 0,T;V\right) }^{2}+\left\Vert 
\mathbf{u}^{\prime }\right\Vert _{L^{2}\left( 0,T;V^{\prime }\right)
}^{2}\right) ^{\frac{1}{2}}\text{\qquad }\left( \mathbf{u}\in \mathcal{W}%
\left( 0,T\right) \right) \text{,}
\end{equation*}%
which makes it a Hilbert space with the following properties (see \cite%
{bib18}): $\mathcal{W}\left( 0,T\right) $ is continuously embedded in $%
\mathcal{C}\left( \left[ 0,T\right] ;L^{2}\left( \Omega \right) ^{N}\right) $
and is compactly embedded in $L^{2}\left( 0,T;L^{2}\left( \Omega \right)
^{N}\right) $.

Our aim here is to investigate the asymptotic behavior, as $\varepsilon
\rightarrow 0$, of $\left( \mathbf{u}_{\varepsilon },p_{\varepsilon }\right) 
$ under the assumption that the functions $a_{ij}$ $\left( 1\leq i,j\leq
N\right) $ are periodic in the space variable $y$. The steady state version
of this problem (i.e., the homogenization of stationary Stokes type
equations) was first investigated by Bensoussan, Lions and Papanicalaou \cite%
{bib2}. These authors use the well-known approach of asymptotic expansions
combined with Tartar's energy method. We also mention the paper of Nguetseng
and the author \cite{bib16}, on the sigma-convergence of stationary
Navier-Stokes type equations.

The present study deals with the periodic homogenization of an evolution
problem for Stokes type equations.

This study is motivated by the fact that the homogenization of (\ref{eq1.3}%
)-(\ref{eq1.6}) is connected with the modelling of heterogeneous fluid flows
(see, e.g., \cite{bib19} for more details about such models).

Our approach is the well-known two-scale convergence method.

Unless otherwise specified, vector spaces throughout are considered over the
complex field, $\mathbb{C}$, and scalar functions are assumed to take
complex values. Let us recall some basic notation. If $X$ and $F$ denote a
locally compact space and a Banach space, respectively, then we write $%
\mathcal{C}\left( X;F\right) $ for continuous mappings of $X$ into $F$, and $%
\mathcal{B}\left( X;F\right) $ for those mappings in $\mathcal{C}\left(
X;F\right) $ that are bounded. We shall assume $\mathcal{B}\left( X;F\right) 
$ to be equipped with the supremum norm $\left\Vert u\right\Vert _{\infty
}=\sup_{x\in X}\left\Vert u\left( x\right) \right\Vert $ ($\left\Vert 
{\small \cdot }\right\Vert $ denotes the norm in $F$). For shortness we will
write $\mathcal{C}\left( X\right) =\mathcal{C}\left( X;\mathbb{C}\right) $
and $\mathcal{B}\left( X\right) =\mathcal{B}\left( X;\mathbb{C}\right) $.
Likewise in the case when $F=\mathbb{C}$, the usual spaces $L^{p}\left(
X;F\right) $ and $L_{loc}^{p}\left( X;F\right) $ ($X$ provided with a
positive Radon measure) will be denoted by $L^{p}\left( X\right) $ and $%
L_{loc}^{p}\left( X\right) $, respectively. Finally, the numerical space $%
\mathbb{R}^{N}$ and its open sets are each provided with Lebesgue measure
denoted by $dx=dx_{1}...dx_{N}$.

The rest of the paper is organized as follows. Section 2 is devoted to the
preliminary results on existence and uniqueness, and some estimates on the
velocity $\mathbf{u}_{\varepsilon }$, the pressure $p_{\varepsilon }$ and
the accelaration $\frac{\partial \mathbf{u}_{\varepsilon }}{\partial t}$ of
the fluid, whereas in Section 3 one convergence theorem is established.

\section{Preliminaries}

Let $\Omega $ be a smooth bounded open set in $\mathbb{R}^{N}$, let $T>0$ be
a real number and let $\mathbf{f}=\left( f^{j}\right) \in L^{2}\left(
0,T;H^{-1}\left( \Omega \right) ^{N}\right) $. For $0<\varepsilon <1$, it is
not apparent that the initial boundary value problem (\ref{eq1.3})-(\ref%
{eq1.6}) has a solution $\left( \mathbf{u}_{\varepsilon },p_{\varepsilon
}\right) $, and that the latter is unique. With a view to elucidating this,
we introduce, for fixed $0<\varepsilon <1$ the bilinear form $a^{\varepsilon
}$ on $H_{0}^{1}\left( \Omega ;\mathbb{R}\right) ^{N}\times H_{0}^{1}\left(
\Omega ;\mathbb{R}\right) ^{N}$ defined by%
\begin{equation*}
a^{\varepsilon }\left( \mathbf{u},\mathbf{v}\right)
=\sum_{k=1}^{N}\sum_{i,j=1}^{N}\int_{\Omega }a_{ij}^{\varepsilon }\frac{%
\partial u^{k}}{\partial x_{j}}\frac{\partial v^{k}}{\partial x_{i}}dx
\end{equation*}%
for $\mathbf{u}=\left( u^{k}\right) $ and $\mathbf{v}=\left( v^{k}\right) $
in $H_{0}^{1}\left( \Omega ;\mathbb{R}\right) ^{N}$. By virtue of (\ref%
{eq1.1}), the form $a^{\varepsilon }$ is symmetric. Further, in view of (\ref%
{eq1.2}), 
\begin{equation}
a^{\varepsilon }\left( \mathbf{v},\mathbf{v}\right) \geq \alpha \left\Vert 
\mathbf{v}\right\Vert _{H_{0}^{1}\left( \Omega \right) ^{N}}^{2}
\label{eq2.1}
\end{equation}%
for every $\mathbf{v}=\left( v^{k}\right) \in H_{0}^{1}\left( \Omega ;%
\mathbb{R}\right) ^{N}$ and $0<\varepsilon <1$, where 
\begin{equation*}
\left\Vert \mathbf{v}\right\Vert _{H_{0}^{1}\left( \Omega \right)
^{N}}=\left( \sum_{k=1}^{N}\int_{\Omega }\left\vert \nabla v^{k}\right\vert
dx\right) ^{\frac{1}{2}}
\end{equation*}%
with $\nabla v^{k}=\left( \frac{\partial v^{k}}{\partial x_{1}},...,\frac{%
\partial v^{k}}{\partial x_{N}}\right) $. On the other hand, it is clear
that a constant $c_{0}>0$ exists such that%
\begin{equation}
\left\vert a^{\varepsilon }\left( \mathbf{u},\mathbf{v}\right) \right\vert
\leq c_{0}\left\Vert \mathbf{u}\right\Vert _{H_{0}^{1}\left( \Omega \right)
^{N}}\left\Vert \mathbf{v}\right\Vert _{H_{0}^{1}\left( \Omega \right) ^{N}}
\label{eq2.2}
\end{equation}%
for every $\mathbf{u}$, $\mathbf{v}\in H_{0}^{1}\left( \Omega ;\mathbb{R}%
\right) ^{N}$ and $0<\varepsilon <1$.

We are now in a position to verify the following result.

\begin{proposition}
\label{pr2.1} Suppose $\mathbf{f}$ lies in $L^{2}\left( 0,T;L^{2}\left(
\Omega ;\mathbb{R}\right) ^{N}\right) $. Under the hypotheses (\ref{eq1.1})-(%
\ref{eq1.2}), the initial boundary value problem (\ref{eq1.3})-(\ref{eq1.6})
determines a unique pair $\left( \mathbf{u}_{\varepsilon },p_{\varepsilon
}\right) $ with

$\mathbf{u}_{\varepsilon }\in L^{2}\left( 0,T;H_{0}^{1}\left( \Omega ;%
\mathbb{R}\right) ^{N}\right) \cap \mathcal{C}\left( \left[ 0,T\right]
;L^{2}\left( \Omega ;\mathbb{R}\right) ^{N}\right) $ and

$p_{\varepsilon }\in L^{2}\left( 0,T;L^{2}\left( \Omega ;\mathbb{R}\right) 
\mathfrak{/}\mathbb{R}\right) $.
\end{proposition}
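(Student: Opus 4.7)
The plan is to split the problem into two parts: first obtain the velocity $\mathbf{u}_{\varepsilon}$ by a variational argument on the divergence-free subspace $V$, and then recover the pressure $p_{\varepsilon}$ by applying de Rham's theorem.

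First I would eliminate the pressure. Testing (\ref{eq1.3}) against $\mathbf{v}\in V$ and using (\ref{eq1.4}) together with the relation $\langle \mathbf{grad}\,p_{\varepsilon},\mathbf{v}\rangle=-\langle p_{\varepsilon},\operatorname{div}\mathbf{v}\rangle=0$, the problem (\ref{eq1.3})--(\ref{eq1.6}) reduces to finding $\mathbf{u}_{\varepsilon}\in\mathcal{W}(0,T)$ such that
\begin{equation*}
\langle \mathbf{u}_{\varepsilon}'(t),\mathbf{v}\rangle + a^{\varepsilon}(\mathbf{u}_{\varepsilon}(t),\mathbf{v})=\langle \mathbf{f}(t),\mathbf{v}\rangle\quad\text{for all }\mathbf{v}\in V,
\end{equation*}
with $\mathbf{u}_{\varepsilon}(0)=0$. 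Thanks to (\ref{eq2.1}) and (\ref{eq2.2}) (restricted to $V\times V$) the bilinear form $a^{\varepsilon}$ is continuous and $V$-coercive, and with $H$ the closure of $V$ in $L^{2}(\Omega;\mathbb{R})^{N}$ we have a Gelfand triple $V\subset H\subset V'$. The standard Lions existence theorem for parabolic variational problems (or the Faedo--Galerkin method combined with Gronwall's lemma for uniqueness) then yields a unique $\mathbf{u}_{\varepsilon}\in\mathcal{W}(0,T)$ solving the reduced problem; the continuous embedding $\mathcal{W}(0,T)\hookrightarrow\mathcal{C}([0,T];L^{2}(\Omega)^{N})$ recalled in the introduction gives the required continuity in time and justifies the initial condition.

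Next I would recover the pressure. Define the distribution $\mathbf{F}_{\varepsilon}(t)=\mathbf{f}(t)-\mathbf{u}_{\varepsilon}'(t)-P^{\varepsilon}\mathbf{u}_{\varepsilon}(t)$. By the variational identity above, $\langle \mathbf{F}_{\varepsilon}(t),\mathbf{v}\rangle=0$ for every $\mathbf{v}\in V$, hence a fortiori for every $\mathbf{v}\in\mathcal{V}=\{\boldsymbol{\varphi}\in\mathcal{D}(\Omega)^{N}:\operatorname{div}\boldsymbol{\varphi}=0\}$. By de Rham's theorem there exists, for a.e.\ $t\in]0,T[$, a distribution $p_{\varepsilon}(t)\in\mathcal{D}'(\Omega)/\mathbb{R}$ with $\mathbf{F}_{\varepsilon}(t)=\mathbf{grad}\,p_{\varepsilon}(t)$, and the zero mean-value normalization $\int_{\Omega}p_{\varepsilon}(t)\,dx=0$ selects a unique representative. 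Uniqueness of the pair $(\mathbf{u}_{\varepsilon},p_{\varepsilon})$ then follows from the already established uniqueness of $\mathbf{u}_{\varepsilon}$ together with the injectivity of $\mathbf{grad}$ on $L^{2}(\Omega;\mathbb{R})/\mathbb{R}$.

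The main obstacle is to show $p_{\varepsilon}\in L^{2}(0,T;L^{2}(\Omega;\mathbb{R})/\mathbb{R})$, i.e.\ to get the right measurability and integrability in $t$ for the pointwise-in-$t$ pressures produced by de Rham. For this I would invoke Ne\v{c}as' inequality, which states that $\mathbf{grad}:L^{2}(\Omega;\mathbb{R})/\mathbb{R}\to H^{-1}(\Omega;\mathbb{R})^{N}$ is a topological isomorphism onto its (closed) range. Since $\mathbf{F}_{\varepsilon}$ belongs to $L^{2}(0,T;H^{-1}(\Omega;\mathbb{R})^{N})$ by virtue of $\mathbf{u}_{\varepsilon}'\in L^{2}(0,T;V')$, the a.e.\ $t$ estimate $\|p_{\varepsilon}(t)\|_{L^{2}/\mathbb{R}}\le C\|\mathbf{F}_{\varepsilon}(t)\|_{H^{-1}(\Omega)^{N}}$ provides both measurability (as the $L^{2}/\mathbb{R}$-valued function $t\mapsto p_{\varepsilon}(t)$ is the image of a measurable map under a bounded linear inverse) and the desired $L^{2}$-integrability in time, completing the proof.
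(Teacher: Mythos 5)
Your treatment of the velocity is fine and coincides with the paper's: both reduce to the abstract parabolic problem on the Gelfand triple $V\subset H\subset V'$ and invoke Lions' existence theorem, with (\ref{eq2.1})--(\ref{eq2.2}) supplying coercivity and continuity. The gap is in the pressure step. You form $\mathbf{F}_{\varepsilon}(t)=\mathbf{f}(t)-\mathbf{u}_{\varepsilon}'(t)-P^{\varepsilon}\mathbf{u}_{\varepsilon}(t)$ for a.e.\ $t$ and assert that it lies in $L^{2}(0,T;H^{-1}(\Omega;\mathbb{R})^{N})$ ``by virtue of $\mathbf{u}_{\varepsilon}'\in L^{2}(0,T;V')$''. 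This does not follow: $V'$ is not a subspace of $H^{-1}(\Omega;\mathbb{R})^{N}$ --- the natural map goes the other way (restriction of functionals on $H^{1}_{0}(\Omega;\mathbb{R})^{N}$ to $V$) and its kernel consists exactly of gradients. Knowing $\mathbf{u}_{\varepsilon}'(t)\in V'$ gives you a functional on divergence-free fields only; it does not single out a distribution on $\Omega$ to which de Rham and Ne\v{c}as' inequality can be applied at fixed $t$. This is precisely the classical difficulty with the pressure in unsteady Stokes problems: with data only in $L^{2}(0,T;V')$ one obtains in general the pressure only as $\partial_{t}P_{\varepsilon}$ for some $P_{\varepsilon}\in\mathcal{C}\left(\left[0,T\right];L^{2}\left(\Omega;\mathbb{R}\right)/\mathbb{R}\right)$, not as an element of $L^{2}(Q)$.

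The paper circumvents this by integrating in time first: with $\mathbf{U}_{\varepsilon}(t)=\int_{0}^{t}P^{\varepsilon}\mathbf{u}_{\varepsilon}(s)\,ds$ and $\mathbf{F}(t)=\int_{0}^{t}\mathbf{f}(s)\,ds$, the quantity $\mathbf{u}_{\varepsilon}(t)+\mathbf{U}_{\varepsilon}(t)-\mathbf{F}(t)$ is a genuine element of $H^{-1}(\Omega;\mathbb{R})^{N}$ annihilating $\mathcal{V}$, de Rham gives $P_{\varepsilon}\in\mathcal{C}\left(\left[0,T\right];L^{2}\left(\Omega;\mathbb{R}\right)/\mathbb{R}\right)$, and one sets $p_{\varepsilon}=\partial_{t}P_{\varepsilon}$. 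The hypothesis $\mathbf{f}\in L^{2}\left(0,T;L^{2}\left(\Omega;\mathbb{R}\right)^{N}\right)$ --- which your argument never uses, a warning sign --- is then invoked to obtain the parabolic regularity $\mathbf{u}_{\varepsilon}'\in L^{2}\left(0,T;L^{2}\left(\Omega;\mathbb{R}\right)^{N}\right)$ (Temam, p.~268); only with this identification of $\mathbf{u}_{\varepsilon}'(t)$ as an honest element of $L^{2}(\Omega)^{N}\subset H^{-1}(\Omega)^{N}$ does your a.e.-in-$t$ de Rham/Ne\v{c}as estimate become legitimate and yield $p_{\varepsilon}\in L^{2}\left(0,T;L^{2}\left(\Omega;\mathbb{R}\right)/\mathbb{R}\right)$. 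Your outline can therefore be repaired, but the repair requires exactly the regularity step you omitted, and that step is where the standing hypothesis on $\mathbf{f}$ enters.
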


\begin{proof}
For fixed $0<\varepsilon <1$, we consider the Cauchy problem%
\begin{equation}
\left\{ 
\begin{array}{c}
\mathbf{u}_{\varepsilon }^{\prime }\left( t\right) +A_{\varepsilon }\mathbf{u%
}_{\varepsilon }\left( t\right) =\mathbf{\ell }\left( t\right) \text{ in }%
]0,T[ \\ 
\mathbf{u}_{\varepsilon }\left( 0\right) =0\text{,}\qquad \qquad \qquad
\qquad \qquad%
\end{array}%
\right.  \label{eq2.3}
\end{equation}%
where $A_{\varepsilon }$ is the linear operator of $V$ into $V^{\prime }$
defined by%
\begin{equation*}
\left( A_{\varepsilon }\mathbf{u},\mathbf{v}\right) =a^{\varepsilon }\left( 
\mathbf{u},\mathbf{v}\right) \text{ for all }\mathbf{u},\mathbf{v}\in V
\end{equation*}%
and $\mathbf{\ell }$ is the function in $L^{2}\left( 0,T;V^{\prime }\right) $
defined by%
\begin{equation*}
\left( \mathbf{\ell }\left( t\right) ,\mathbf{v}\right) =\left( \mathbf{f}%
\left( t\right) ,\mathbf{v}\right) \text{ for all }\mathbf{v}\in V
\end{equation*}%
and for almost all $t\in ]0,T[$, and where $\left( ,\right) $ denotes the
duality pairing between $V^{\prime }$ and $V$ as well as between $%
H^{-1}\left( \Omega ;\mathbb{R}\right) ^{N}$ and $H_{0}^{1}\left( \Omega ;%
\mathbb{R}\right) ^{N}$. Thanks to (\ref{eq2.1})-(\ref{eq2.2}) the Cauchy
problem (\ref{eq2.3}) admits a unique solution $\mathbf{u}_{\varepsilon }$
in $\mathcal{W}\left( 0,T\right) $, as is easily seen by following \cite[%
Chap.3, Th\'{e}or\`{e}me 1.2, p.116]{bib5}, see also \cite[pp.254-260]{bib18}%
. Now, let us check that the abstract parabolic problem (\ref{eq2.3}) is
equivalent to (\ref{eq1.3})-(\ref{eq1.6}). Let $\mathbf{U}_{\varepsilon
}\left( t\right) =\int_{0}^{t}P^{\varepsilon }\mathbf{u}_{\varepsilon
}\left( s\right) ds$ and $\mathbf{F}\left( t\right) =\int_{0}^{t}\mathbf{f}%
\left( s\right) ds$ for $0\leq t\leq T$, where $\mathbf{u}_{\varepsilon }$
satisfies (\ref{eq2.3}). It is evident that $\mathbf{U}_{\varepsilon }$ and $%
\mathbf{F}\in \mathcal{C}\left( \left[ 0,T\right] ;H^{-1}\left( \Omega ;%
\mathbb{R}\right) ^{N}\right) $. By the first equality of (\ref{eq2.3}) we
have 
\begin{equation}
\frac{d}{dt}\left( \mathbf{u}_{\varepsilon }\left( t\right) ,\mathbf{\varphi 
}\right) =\left( \mathbf{\ell }\left( t\right) -A_{\varepsilon }\mathbf{u}%
_{\varepsilon }\left( t\right) ,\mathbf{\varphi }\right) \text{ for all }%
\mathbf{\varphi }\in \mathcal{V}\text{, }  \label{eq2.4}
\end{equation}%
where 
\begin{equation*}
\mathcal{V}=\left\{ \mathbf{\varphi }\in \mathcal{D}\left( \Omega ;\mathbb{R}%
\right) ^{N}:div\mathbf{\varphi }=0\right\} \text{.}
\end{equation*}%
Integrating (\ref{eq2.4}), we have%
\begin{equation*}
\left\langle \mathbf{u}_{\varepsilon }\left( t\right) +\mathbf{U}%
_{\varepsilon }\left( t\right) -\mathbf{F}\left( t\right) ,\mathbf{\varphi }%
\right\rangle =0\text{, }0\leq t\leq T\text{, }\mathbf{\varphi }\in \mathcal{%
V}\text{.}
\end{equation*}%
Thus, using a classical argument (see, e.g., \cite[p.14]{bib18}), we get a
function $P_{\varepsilon }\in \mathcal{C}\left( \left[ 0,T\right]
;L^{2}\left( \Omega ;\mathbb{R}\right) /\mathbb{R}\right) $ such that%
\begin{equation*}
\mathbf{u}_{\varepsilon }+\mathbf{U}_{\varepsilon }+\mathbf{grad}%
P_{\varepsilon }=\mathbf{F}\text{.}
\end{equation*}%
Hence $p_{\varepsilon }=\frac{\partial P_{\varepsilon }}{\partial t}\in 
\mathcal{D}^{\prime }\left( Q\right) $ and the pair $\left( \mathbf{u}%
_{\varepsilon },p_{\varepsilon }\right) $ verifies (\ref{eq1.3}) (in the
distribution sense on $Q$), with in addition (\ref{eq1.4})-(\ref{eq1.6}), of
course. Futhermore, by using the fact that $\mathbf{f}\in L^{2}\left(
0,T;L^{2}\left( \Omega ;\mathbb{R}\right) ^{N}\right) $ we have $\mathbf{u}%
_{\varepsilon }^{\prime }\in L^{2}\left( 0,T;L^{2}\left( \Omega ;\mathbb{R}%
\right) ^{N}\right) $, as is easily seen by following \cite[p.268]{bib18}.
Therefore $p_{\varepsilon }$ lies in $L^{2}\left( 0,T;L^{2}\left( \Omega ;%
\mathbb{R}\right) \mathfrak{/}\mathbb{R}\right) $ and is unique. Conversely,
it is an easy exercise to verify that if $\left( \mathbf{u}_{\varepsilon
},p_{\varepsilon }\right) $ is a solution of (\ref{eq1.3})-(\ref{eq1.6})
with $\mathbf{u}_{\varepsilon }\in \mathcal{W}\left( 0,T\right) $ and $%
p_{\varepsilon }\in L^{2}\left( 0,T;L^{2}\left( \Omega ;\mathbb{R}\right)
\right) $, then $\mathbf{u}_{\varepsilon }$ satisfies (\ref{eq2.3}). The
proof is complete.
\end{proof}

The following regularity result is fundamental for the estimates of the
solution $\left( \mathbf{u}_{\varepsilon },p_{\varepsilon }\right) $ of (\ref%
{eq1.3})-(\ref{eq1.6}).

\begin{lemma}
\label{lem2.1} Suppose $\mathbf{f},\mathbf{f}^{\prime }\in L^{2}\left(
0,T;V^{\prime }\right) $ and $\mathbf{f}\left( 0\right) \in L^{2}\left(
\Omega ;\mathbb{R}\right) ^{N}$. Then the solution $\mathbf{u}_{\varepsilon
} $ of (\ref{eq2.3}) verifies:%
\begin{equation*}
\mathbf{u}_{\varepsilon }^{\prime }\in L^{2}\left( 0,T;V\right) \cap
L^{\infty }\left( 0,T;H\right) \text{,}
\end{equation*}%
where $H$ is the closure of $\mathcal{V}$ in $L^{2}\left( \Omega ;\mathbb{R}%
\right) ^{N}$.
\end{lemma}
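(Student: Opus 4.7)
The strategy is to differentiate the Cauchy problem \eqref{eq2.3} in time and to derive an energy estimate for $\mathbf{w}_{\varepsilon}:=\mathbf{u}_{\varepsilon}^{\prime}$. Formally, applying $\partial _{t}$ to the first equation of \eqref{eq2.3} gives
\begin{equation*}
\mathbf{w}_{\varepsilon}^{\prime}(t)+A_{\varepsilon}\mathbf{w}_{\varepsilon}(t)=\mathbf{\ell}^{\prime}(t)\text{\quad in }]0,T[,
\end{equation*}
with initial datum $\mathbf{w}_{\varepsilon}(0)=\mathbf{\ell}(0)-A_{\varepsilon}\mathbf{u}_{\varepsilon}(0)=\mathbf{\ell}(0)$, because $\mathbf{u}_{\varepsilon}(0)=0$. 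Since $\left(\mathbf{\ell}(0),\mathbf{v}\right)=\int_{\Omega}\mathbf{f}(0)\cdot\mathbf{v}\,dx$ for $\mathbf{v}\in V$, this datum is represented by $\mathbf{f}(0)\in L^{2}(\Omega;\mathbb{R})^{N}$, and its $H$-orthogonal projection has $H$-norm at most $\left\Vert \mathbf{f}(0)\right\Vert _{L^{2}(\Omega)^{N}}$. Testing the differentiated equation with $\mathbf{w}_{\varepsilon}(t)$, using the symmetry of $a^{\varepsilon}$, the coercivity \eqref{eq2.1}, Young's inequality, and integrating in $t$, one expects the bound
\begin{equation*}
\left\vert \mathbf{w}_{\varepsilon}(t)\right\vert _{H}^{2}+\alpha\int_{0}^{t}\left\Vert \mathbf{w}_{\varepsilon}(s)\right\Vert _{V}^{2}ds\leq \left\Vert \mathbf{f}(0)\right\Vert _{L^{2}(\Omega)^{N}}^{2}+\tfrac{1}{\alpha}\left\Vert \mathbf{f}^{\prime}\right\Vert _{L^{2}(0,T;V^{\prime})}^{2},
\end{equation*}
which yields at once $\mathbf{w}_{\varepsilon}\in L^{\infty}(0,T;H)\cap L^{2}(0,T;V)$.

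To make this rigorous, since a priori we only know $\mathbf{u}_{\varepsilon}\in\mathcal{W}(0,T)$, I would employ the Faedo--Galerkin scheme already alluded to in the proof of Proposition \ref{pr2.1}. Fix a Hilbert basis $\{\mathbf{v}_{k}\}_{k\geq 1}$ of $V$ that is orthonormal in $H$, set $V_{m}=\mathrm{span}(\mathbf{v}_{1},\ldots,\mathbf{v}_{m})$, and seek $\mathbf{u}_{\varepsilon,m}(t)=\sum_{k=1}^{m}g_{km}(t)\mathbf{v}_{k}$ solving the projected system with $\mathbf{u}_{\varepsilon,m}(0)=0$. Under the extra assumption $\mathbf{f}^{\prime}\in L^{2}(0,T;V^{\prime})$, the coefficient functions $g_{km}$ belong to $H^{2}(0,T)$, so the Galerkin ODE may legitimately be differentiated in $t$. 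Testing the resulting ODE for $\mathbf{u}_{\varepsilon,m}^{\prime}$ against itself gives the same energy inequality as above at the discrete level, with initial term $\mathbf{u}_{\varepsilon,m}^{\prime}(0)$ equal to the $H$-orthogonal projection of $\mathbf{f}(0)$ onto $V_{m}$, hence bounded in $H$ by $\left\Vert \mathbf{f}(0)\right\Vert _{L^{2}(\Omega)^{N}}$ uniformly in $m$. The conclusion follows from the standard weak-$\ast$ and weak compactness arguments in $L^{\infty}(0,T;H)$ and $L^{2}(0,T;V)$, respectively, the limit being identified with $\mathbf{u}_{\varepsilon}^{\prime}$ by uniqueness in \eqref{eq2.3}.

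The main obstacle is precisely this justification of the formal time-differentiation: it forces the use of a basis of $V$ compatible with the $H$ inner product, and the hypothesis $\mathbf{f}(0)\in L^{2}(\Omega;\mathbb{R})^{N}$ is used critically to control the Galerkin initial data in $H$, while the assumption $\mathbf{f}^{\prime}\in L^{2}(0,T;V^{\prime})$ only keeps the forcing in the natural dual space. Once these ingredients are in place, the residual reasoning is the classical energy/compactness machinery in the spirit of \cite[Chap.3]{bib5} and \cite[pp.268--270]{bib18}.
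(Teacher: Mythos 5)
Your argument is correct and is precisely the one the paper intends: the paper omits the proof and refers to \cite[p.299, Theorem 3.5]{bib18}, whose proof is exactly this Faedo--Galerkin time-differentiation with the initial datum $\mathbf{u}_{\varepsilon}^{\prime}(0)$ controlled in $H$ by $\left\vert \mathbf{f}(0)\right\vert$ and the forcing $\mathbf{f}^{\prime}$ kept in $L^{2}(0,T;V^{\prime})$. No discrepancy to report.
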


The proof of the above lemma follows by the same line of argument as in the
proof of \cite[p.299, Theorem 3.5]{bib18}. So we omit it. We are now able to
prove the result on the estimates.

\begin{proposition}
\label{pr2.2} Under the hypotheses of Lemma \ref{lem2.1}, there exists a
constant $c>0$ (independent of $\varepsilon $) such that the pair $\left( 
\mathbf{u}_{\varepsilon },p_{\varepsilon }\right) $ solution of (\ref{eq1.3}%
)-(\ref{eq1.6}) in $\mathcal{W}\left( 0,T\right) \times L^{2}\left(
0,T;L^{2}\left( \Omega ;\mathbb{R}\right) \mathfrak{/}\mathbb{R}\right) $
satisfies:%
\begin{equation}
\left\Vert \mathbf{u}_{\varepsilon }\right\Vert _{\mathcal{W}\left(
0,T\right) }\leq c  \label{eq2.5}
\end{equation}%
\begin{equation}
\left\Vert \frac{\partial \mathbf{u}_{\varepsilon }}{\partial t}\right\Vert
_{L^{2}\left( 0,T;H^{-1}\left( \Omega \right) ^{N}\right) }\leq c
\label{eq2.6}
\end{equation}%
and 
\begin{equation}
\left\Vert p_{\varepsilon }\right\Vert _{L^{2}\left( 0,T;L^{2}\left( \Omega
\right) \right) }\leq c\text{.}  \label{eq2.7}
\end{equation}
\end{proposition}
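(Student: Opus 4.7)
The plan is to derive the three estimates in the stated order, exploiting throughout that the coercivity constant $\alpha$ in (\ref{eq1.2}) and the continuity constant $c_{0}$ in (\ref{eq2.2}) are both independent of $\varepsilon$, so that any constant produced by an energy identity will automatically be $\varepsilon$-free.

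For (\ref{eq2.5}), I would start from the abstract Cauchy problem (\ref{eq2.3}) and pair the evolution equation with $\mathbf{u}_{\varepsilon}(t)\in V$. Using the symmetry of $a^{\varepsilon}$, this rewrites as
\begin{equation*}
\tfrac{1}{2}\tfrac{d}{dt}\Vert \mathbf{u}_{\varepsilon}(t)\Vert _{L^{2}(\Omega)^{N}}^{2}+a^{\varepsilon}(\mathbf{u}_{\varepsilon}(t),\mathbf{u}_{\varepsilon}(t)) = (\ell(t),\mathbf{u}_{\varepsilon}(t)).
\end{equation*}
Coercivity (\ref{eq2.1}), Young's inequality on the right-hand side, and $\mathbf{u}_{\varepsilon}(0)=0$ give, after integration on $[0,T]$, a uniform bound for $\mathbf{u}_{\varepsilon}$ in $L^{\infty}(0,T;H)\cap L^{2}(0,T;V)$. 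Plugging back into (\ref{eq2.3}) and using (\ref{eq2.2}) then yields a uniform bound for $\mathbf{u}_{\varepsilon}^{\prime}$ in $L^{2}(0,T;V^{\prime})$, which together establish (\ref{eq2.5}).

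For (\ref{eq2.6}), I would formally differentiate (\ref{eq2.3}) in $t$, obtaining $\mathbf{u}_{\varepsilon}^{\prime \prime}+A_{\varepsilon}\mathbf{u}_{\varepsilon}^{\prime}=\mathbf{f}^{\prime}$ with initial datum $\mathbf{u}_{\varepsilon}^{\prime}(0)=\mathbf{f}(0)\in L^{2}(\Omega;\mathbb{R})^{N}$ (read off from (\ref{eq2.3}) at $t=0$ using $\mathbf{u}_{\varepsilon}(0)=0$). This manipulation is legitimate thanks to Lemma \ref{lem2.1}, which guarantees $\mathbf{u}_{\varepsilon}^{\prime}\in L^{2}(0,T;V)\cap L^{\infty}(0,T;H)$ and hence $\mathbf{u}_{\varepsilon}^{\prime}\in \mathcal{W}(0,T)$. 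Testing the differentiated equation against $\mathbf{u}_{\varepsilon}^{\prime}$ and applying (\ref{eq2.1}) once more yields an $\varepsilon$-independent bound $\Vert \mathbf{u}_{\varepsilon}^{\prime}\Vert_{L^{2}(0,T;V)}\leq c$. Since $V$ embeds continuously into $L^{2}(\Omega)^{N}$ and the latter into $H^{-1}(\Omega)^{N}$, (\ref{eq2.6}) follows. For (\ref{eq2.7}), I rewrite (\ref{eq1.3}) as
\begin{equation*}
\mathbf{grad}\, p_{\varepsilon} = \mathbf{f} - \tfrac{\partial \mathbf{u}_{\varepsilon}}{\partial t} - P^{\varepsilon}\mathbf{u}_{\varepsilon} \quad \text{in } \mathcal{D}^{\prime}(\Omega)^{N}
\end{equation*}
for almost every $t\in ]0,T[$. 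The three terms on the right are uniformly bounded in $L^{2}(0,T;H^{-1}(\Omega)^{N})$: $\mathbf{f}$ by hypothesis, the time derivative by (\ref{eq2.6}), and $P^{\varepsilon}\mathbf{u}_{\varepsilon}$ by (\ref{eq2.5}) together with the $L^{\infty}$ bound on the coefficients $a_{ij}$. The Ne\v{c}as inequality on the smooth bounded domain $\Omega$ then gives $\Vert p_{\varepsilon}(t)\Vert _{L^{2}(\Omega)/\mathbb{R}}\leq C\Vert \mathbf{grad}\, p_{\varepsilon}(t)\Vert _{H^{-1}(\Omega)^{N}}$, and integrating in $t$ delivers (\ref{eq2.7}).

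The main obstacle I anticipate is the gap between the easy bound $\mathbf{u}_{\varepsilon}^{\prime}\in L^{2}(0,T;V^{\prime})$, delivered directly by (\ref{eq2.3}), and the stronger $L^{2}(0,T;H^{-1})$-bound demanded by (\ref{eq2.6}) and used crucially in (\ref{eq2.7}). Because $V$ is a strict subspace of $H_{0}^{1}(\Omega)^{N}$, its dual $V^{\prime}$ is strictly larger than $H^{-1}(\Omega)^{N}$, the difference being precisely a gradient, i.e.\ the missing pressure. This is why the detour through the differentiated equation and Lemma \ref{lem2.1} is needed: it upgrades the $V^{\prime}$ information to an $H^{-1}$ one with a constant independent of $\varepsilon$, thereby closing the loop with the pressure estimate.
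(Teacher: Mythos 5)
Your proposal is correct and, for the two velocity estimates, follows essentially the same route as the paper: test with $\mathbf{u}_{\varepsilon}(t)$ to get the $L^{2}(0,T;V)$ bound, read $\mathbf{u}_{\varepsilon}^{\prime}=\mathbf{f}-A_{\varepsilon}\mathbf{u}_{\varepsilon}$ off (\ref{eq2.3}) together with (\ref{eq2.2}) for the $V^{\prime}$ bound, then differentiate the equation in $t$, control $\mathbf{u}_{\varepsilon}^{\prime}(0)$ by $\mathbf{f}(0)$, and run the energy estimate on $\mathbf{u}_{\varepsilon}^{\prime}$ to upgrade the acceleration bound from $V^{\prime}$ to $L^{2}$ (and hence $H^{-1}$); your closing remark about why the detour through the differentiated equation is unavoidable is exactly the point. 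Two small comments. First, the identity $\mathbf{u}_{\varepsilon}^{\prime}(0)=\mathbf{f}(0)$ is not quite right: evaluating (\ref{eq2.3}) at $t=0$ gives $\mathbf{u}_{\varepsilon}^{\prime}(0)=\mathbf{\ell}(0)$ in $V^{\prime}$, i.e.\ $\mathbf{u}_{\varepsilon}^{\prime}(0)$ is the projection of $\mathbf{f}(0)$ onto $H$ rather than $\mathbf{f}(0)$ itself (which need not be divergence-free); what one actually obtains, and what the paper records as its inequality $\left\vert \mathbf{u}_{\varepsilon}^{\prime}(0)\right\vert \leq \left\vert \mathbf{f}(0)\right\vert$, is the bound $\left( \mathbf{u}_{\varepsilon}^{\prime}(0),\mathbf{u}_{\varepsilon}^{\prime}(0)\right) =\left( \mathbf{f}(0),\mathbf{u}_{\varepsilon}^{\prime}(0)\right)$ from testing (\ref{eq2.8}) at $t=0$ with $\mathbf{v}=\mathbf{u}_{\varepsilon}^{\prime}(0)$. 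This is all that is needed, so it is an imprecision rather than a gap. Second, for the pressure you argue in dual form: you bound $\mathbf{grad}\,p_{\varepsilon}=\mathbf{f}-\partial _{t}\mathbf{u}_{\varepsilon}-P^{\varepsilon}\mathbf{u}_{\varepsilon}$ in $L^{2}(0,T;H^{-1}(\Omega )^{N})$ and invoke the Ne\v{c}as inequality, whereas the paper uses the equivalent primal statement (the Tartar/Bogovskii right inverse of the divergence), producing $\mathbf{v}_{\varepsilon}(t)\in H_{0}^{1}(\Omega ;\mathbb{R})^{N}$ with $div\,\mathbf{v}_{\varepsilon}(t)=p_{\varepsilon}(t)$ and testing (\ref{eq1.3}) against it. These are two faces of the same functional-analytic fact and both deliver (\ref{eq2.7}) with constants depending only on $\Omega$, $\alpha$, $c_{0}$ and $\mathbf{f}$; your version is slightly more economical to write, the paper's avoids naming the Ne\v{c}as inequality explicitly.
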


\begin{proof}
Let $\left( \mathbf{u}_{\varepsilon },p_{\varepsilon }\right) $ be the
solution of (\ref{eq1.3})-(\ref{eq1.6}). We have%
\begin{equation}
\left( \mathbf{u}_{\varepsilon }^{\prime }\left( t\right) ,\mathbf{v}\right)
+a^{\varepsilon }\left( \mathbf{u}_{\varepsilon }\left( t\right) ,\mathbf{v}%
\right) =\left( \mathbf{f}\left( t\right) ,\mathbf{v}\right) \text{\quad }%
\left( \mathbf{v}\in V\right)  \label{eq2.8}
\end{equation}%
for almost all $t\in \left[ 0,T\right] $. By taking in particular $\mathbf{v}%
=\mathbf{u}_{\varepsilon }\left( t\right) $ in (\ref{eq2.8}), we have for
almost all $t\in \left[ 0,T\right] $ 
\begin{equation*}
\frac{d}{dt}\left\vert \mathbf{u}_{\varepsilon }\left( t\right) \right\vert
^{2}+2\alpha \left\Vert \mathbf{u}_{\varepsilon }\left( t\right) \right\Vert
^{2}\leq \frac{1}{\alpha }\left\Vert \mathbf{f}\left( t\right) \right\Vert
_{V^{\prime }}^{2}+\alpha \left\Vert \mathbf{u}_{\varepsilon }\left(
t\right) \right\Vert ^{2}
\end{equation*}%
where $\left\vert {\small \cdot }\right\vert $ and $\left\Vert {\small \cdot 
}\right\Vert $ are respectively the norms in $L^{2}\left( \Omega \right)
^{N} $ and $H_{0}^{1}\left( \Omega \right) ^{N}$. Hence, for every $s\in %
\left[ 0,T\right] $ 
\begin{equation*}
\left\vert \mathbf{u}_{\varepsilon }\left( s\right) \right\vert ^{2}+\alpha
\int_{0}^{s}\left\Vert \mathbf{u}_{\varepsilon }\left( t\right) \right\Vert
^{2}dt\leq \frac{1}{\alpha }\int_{0}^{T}\left\Vert \mathbf{f}\left( t\right)
\right\Vert _{V^{\prime }}^{2}dt
\end{equation*}%
since $\mathbf{u}_{\varepsilon }\left( 0\right) =0$. By the preceding
inequality, we see that%
\begin{equation}
\alpha \int_{0}^{T}\left\Vert \mathbf{u}_{\varepsilon }\left( t\right)
\right\Vert ^{2}dt\leq \frac{1}{\alpha }\int_{0}^{T}\left\Vert \mathbf{f}%
\left( t\right) \right\Vert _{V^{\prime }}^{2}dt\text{.}  \label{eq2.9}
\end{equation}%
On the other hand, the abstract parabolic problem (\ref{eq2.3}) gives%
\begin{equation*}
\mathbf{u}_{\varepsilon }^{\prime }=\mathbf{f}-A_{\varepsilon }\mathbf{u}%
_{\varepsilon }\text{.}
\end{equation*}%
Hence, in view of (\ref{eq2.2})%
\begin{equation}
\left\Vert \mathbf{u}_{\varepsilon }^{\prime }\right\Vert _{L^{2}\left(
0,T;V^{\prime }\right) }\leq \left\Vert \mathbf{f}\right\Vert _{L^{2}\left(
0,T;V^{\prime }\right) }+c_{0}\left\Vert \mathbf{u}_{\varepsilon
}\right\Vert _{L^{2}\left( 0,T;V\right) }\text{.}  \label{eq2.10}
\end{equation}%
Thus, by (\ref{eq2.9}) and (\ref{eq2.10}) one quickly arrives at (\ref{eq2.5}%
). Let us show (\ref{eq2.6}). We are allowed to differentiate (\ref{eq2.8})
in distribution sense on $]0,T[$, and by virtue of the hypotheses of Lemma %
\ref{lem2.1}, we get $\mathbf{u}_{\varepsilon }^{\prime \prime }\in
L^{2}\left( 0,T;V^{\prime }\right) $ and 
\begin{equation}
\left( \mathbf{u}_{\varepsilon }^{\prime \prime },\mathbf{v}\right)
+a^{\varepsilon }\left( \mathbf{u}_{\varepsilon }^{\prime },\mathbf{v}%
\right) =\left( \mathbf{f}^{\prime },\mathbf{v}\right) \text{\quad }\left( 
\mathbf{v}\in V\right) \text{.}  \label{eq2.10a}
\end{equation}%
In view of Lemma \ref{lem2.1}, we take in particular $\mathbf{v}=\mathbf{u}%
_{\varepsilon }^{\prime }\left( t\right) $ in (\ref{eq2.8}). This yields 
\begin{equation*}
\left\vert \mathbf{u}_{\varepsilon }^{\prime }\left( t\right) \right\vert
^{2}+a^{\varepsilon }\left( \mathbf{u}_{\varepsilon }\left( t\right) ,%
\mathbf{u}_{\varepsilon }^{\prime }\left( t\right) \right) =\left( \mathbf{f}%
\left( t\right) ,\mathbf{u}_{\varepsilon }^{\prime }\left( t\right) \right)
\quad \left( t\in \left[ 0,T\right] \right) \text{.}
\end{equation*}%
Further, since $\mathbf{u}_{\varepsilon }^{\prime }\in L^{2}\left(
0,T;V\right) $ and $\mathbf{u}_{\varepsilon }^{\prime \prime }\in
L^{2}\left( 0,T;V^{\prime }\right) $, we have $\mathbf{u}_{\varepsilon
}^{\prime }\in \mathcal{C}\left( \left[ 0,T\right] ;H\right) $. Hence, by
taking in particular $t=0$ in the preceding equality and using (\ref{eq1.6})
one quickly arrives at 
\begin{equation*}
\left\vert \mathbf{u}_{\varepsilon }^{\prime }\left( 0\right) \right\vert
^{2}\leq \left\vert \mathbf{f}\left( 0\right) \right\vert \left\vert \mathbf{%
u}_{\varepsilon }^{\prime }\left( 0\right) \right\vert \text{,}
\end{equation*}%
i.e.,%
\begin{equation}
\left\vert \mathbf{u}_{\varepsilon }^{\prime }\left( 0\right) \right\vert
\leq \left\vert \mathbf{f}\left( 0\right) \right\vert \text{.}
\label{eq2.11}
\end{equation}%
The inequality (\ref{eq2.11}) shows that $\mathbf{u}_{\varepsilon }^{\prime
}\left( 0\right) $ lies in a bounded subset of $H$.\ On\ the other hand, by
taking in particular $\mathbf{v}=\mathbf{u}_{\varepsilon }^{\prime }\left(
t\right) $ in (\ref{eq2.10a}), we get%
\begin{equation*}
\frac{d}{dt}\left\vert \mathbf{u}_{\varepsilon }^{\prime }\left( t\right)
\right\vert ^{2}+2\alpha \left\Vert \mathbf{u}_{\varepsilon }^{\prime
}\left( t\right) \right\Vert ^{2}\leq \frac{1}{\alpha }\left\Vert \mathbf{f}%
^{\prime }\left( t\right) \right\Vert _{V^{\prime }}^{2}+\alpha \left\Vert 
\mathbf{u}_{\varepsilon }^{\prime }\left( t\right) \right\Vert ^{2}
\end{equation*}%
for almost all $t\in \left[ 0,T\right] $. Integrating the preceding
inequality on $\left[ 0,t\right] $ ($t\in \left[ 0,T\right] $) leads to 
\begin{equation*}
\left\vert \mathbf{u}_{\varepsilon }^{\prime }\left( t\right) \right\vert
^{2}+\alpha \int_{0}^{t}\left\Vert \mathbf{u}_{\varepsilon }^{\prime }\left(
s\right) \right\Vert ^{2}ds\leq \frac{1}{\alpha }\left\Vert \mathbf{f}%
^{\prime }\right\Vert _{L^{2}\left( 0,T;V^{\prime }\right) }^{2}+\left\vert 
\mathbf{u}_{\varepsilon }^{\prime }\left( 0\right) \right\vert ^{2}\text{.}
\end{equation*}%
It follows from (\ref{eq2.11}) and the preceding inequality that $\mathbf{u}%
_{\varepsilon }^{\prime }$ belongs to a bounded subset of $L^{2}\left(
0,T;L^{2}\left( \Omega ;\mathbb{R}\right) ^{N}\right) $. Hence (\ref{eq2.6})
is immediate. Let us prove (\ref{eq2.7}). For almost all $t\in \left[ 0,T%
\right] $, $p_{\varepsilon }\left( t\right) \in L^{2}\left( \Omega ;\mathbb{R%
}\right) \mathfrak{/}\mathbb{R}$. Thus, by \cite[p. 30]{bib17} there exists $%
\mathbf{v}_{\varepsilon }\left( t\right) \in $ $H_{0}^{1}\left( \Omega ;%
\mathbb{R}\right) ^{N}$ such that%
\begin{equation}
div\mathbf{v}_{\varepsilon }\left( t\right) =p_{\varepsilon }\left( t\right)
\label{eq2.12a}
\end{equation}%
\begin{equation}
\left\Vert \mathbf{v}_{\varepsilon }\left( t\right) \right\Vert \leq
c_{1}\left\vert p_{\varepsilon }\left( t\right) \right\vert _{L^{2}\left(
\Omega \right) }\text{,}  \label{eq2.12b}
\end{equation}%
where the constant $c_{1}$ depends solely on $\Omega $. Multiplying (\ref%
{eq1.3}) by $\mathbf{v}_{\varepsilon }\left( t\right) $ yields 
\begin{equation*}
\left( \mathbf{u}_{\varepsilon }^{\prime }\left( t\right) ,\mathbf{v}%
_{\varepsilon }\left( t\right) \right) +a^{\varepsilon }\left( \mathbf{u}%
_{\varepsilon }\left( t\right) ,\mathbf{v}_{\varepsilon }\left( t\right)
\right) -\int_{\Omega }p_{\varepsilon }\left( t\right) div\mathbf{v}%
_{\varepsilon }\left( t\right) dx=\left( \mathbf{f}\left( t\right) ,\mathbf{v%
}_{\varepsilon }\left( t\right) \right)
\end{equation*}%
for almost all $t\in \left[ 0,T\right] $. Integrating the preceding equality
on $\left[ 0,T\right] $ and using (\ref{eq2.12a})-(\ref{eq2.12b}) lead to 
\begin{equation*}
\left\Vert p_{\varepsilon }\right\Vert _{L^{2}\left( Q\right) }^{2}\leq
c_{1}c\left\Vert \mathbf{u}_{\varepsilon }^{\prime }\right\Vert
_{L^{2}\left( 0,T;L^{2}\left( \Omega \right) ^{N}\right) }\left\Vert
p_{\varepsilon }\right\Vert _{L^{2}\left( Q\right) }+c_{1}\left\Vert \mathbf{%
f}\right\Vert _{L^{2}\left( 0,T;H^{-1}\left( \Omega \right) \right)
}\left\Vert p_{\varepsilon }\right\Vert _{L^{2}\left( Q\right) }
\end{equation*}%
\begin{equation*}
+c_{1}c_{0}\left\Vert \mathbf{u}_{\varepsilon }\right\Vert _{L^{2}\left(
0,T;V\right) }\left\Vert p_{\varepsilon }\right\Vert _{L^{2}\left( Q\right) }%
\text{,}
\end{equation*}%
where $c$ is the constant in the Poincar\'{e} inequality, $c_{0}$ and $c_{1}$
are the constants in (\ref{eq2.2}) and (\ref{eq2.12b})\ respectively. Thus, 
\begin{equation}
\left\Vert p_{\varepsilon }\right\Vert _{L^{2}\left( Q\right) }\leq
c_{1}c\left\Vert \mathbf{u}_{\varepsilon }^{\prime }\right\Vert
_{L^{2}\left( 0,T;L^{2}\left( \Omega \right) ^{N}\right) }+c_{1}\left\Vert 
\mathbf{f}\right\Vert _{L^{2}\left( 0,T;H^{-1}\left( \Omega \right) \right)
}+c_{1}c_{0}\left\Vert \mathbf{u}_{\varepsilon }\right\Vert _{L^{2}\left(
0,T;V\right) }\text{.}  \label{eq2.13}
\end{equation}%
Combining (\ref{eq2.13}), (\ref{eq2.5}) and (\ref{eq2.6}) leads to (\ref%
{eq2.7}).
\end{proof}

\section{A convergence result for (\protect\ref{eq1.3})-(\protect\ref{eq1.6})%
}

We set $Y=\left( -\frac{1}{2},\frac{1}{2}\right) ^{N}$, $Y$ considered as a
subset of $\mathbb{R}_{y}^{N}$ (the space $\mathbb{R}^{N}$ of variables $%
y=\left( y_{1},...,y_{N}\right) $). We set also $Z=\left( -\frac{1}{2},\frac{%
1}{2}\right) $, $Z$ considered as a subset of $\mathbb{R}_{\tau }$ (the
space $\mathbb{R}$ of variables $\tau $). Our purpose is to study the
homogenization of (\ref{eq1.3})-(\ref{eq1.6}) under the periodicity
hypothesis on $a_{ij}$.

\subsection{\textbf{Preliminaries}}

Let us first recall that a function $u\in L_{loc}^{1}\left( \mathbb{R}%
_{y}^{N}\times \mathbb{R}_{\tau }\right) $ is said to be $Y\times Z$%
-periodic if for each $\left( k,l\right) \in \mathbb{Z}^{N}\times \mathbb{Z}$
($\mathbb{Z}$ denotes the integers), we have $u\left( y+k,\tau +l\right)
=u\left( y,\tau \right) $ almost everywhere (a.e.) in $\left( y,\tau \right)
\in \mathbb{R}^{N}\times \mathbb{R}$. If in addition $u$ is continuous, then
the preceding equality holds for every $\left( y,\tau \right) \in \mathbb{R}%
^{N}\times \mathbb{R}$, of course. The space of all $Y\times Z$-periodic
continuous complex functions on $\mathbb{R}_{y}^{N}\times \mathbb{R}_{\tau }$
is denoted by $\mathcal{C}_{per}\left( Y\times Z\right) $; that of all $%
Y\times Z$-periodic functions in $L_{loc}^{p}\left( \mathbb{R}_{y}^{N}\times 
\mathbb{R}_{\tau }\right) $ $\left( 1\leq p<\infty \right) $ is denoted by $%
L_{per}^{p}\left( Y\times Z\right) $. $\mathcal{C}_{per}\left( Y\times
Z\right) $ is a Banach space under the supremum norm on $\mathbb{R}%
^{N}\times \mathbb{R}$, whereas $L_{per}^{p}\left( Y\times Z\right) $ is a
Banach space under the norm 
\begin{equation*}
\left\Vert u\right\Vert _{L^{p}\left( Y\times Z\right) }=\left(
\int_{Z}\int_{Y}\left\vert u\left( y,\tau \right) \right\vert ^{p}dyd\tau
\right) ^{\frac{1}{p}}\text{ }\left( u\in L_{per}^{p}\left( Y\times Z\right)
\right) \text{.}
\end{equation*}

We will need the space $H_{\#}^{1}\left( Y\right) $ of $Y$-periodic
functions $u\in H_{loc}^{1}\left( \mathbb{R}_{y}^{N}\right)
=W_{loc}^{1,2}\left( \mathbb{R}_{y}^{N}\right) $ such that $\int_{Y}u\left(
y\right) dy=0$. Provided with the gradient norm, 
\begin{equation*}
\left\Vert u\right\Vert _{H_{\#}^{1}\left( Y\right) }=\left(
\int_{Y}\left\vert \nabla _{y}u\right\vert ^{2}dy\right) ^{\frac{1}{2}}\text{
}\left( u\in H_{\#}^{1}\left( Y\right) \right) \text{,}
\end{equation*}%
where $\nabla _{y}u=\left( \frac{\partial u}{\partial y_{1}},...,\frac{%
\partial u}{\partial y_{N}}\right) $, $H_{\#}^{1}\left( Y\right) $ is a
Hilbert space. We will also need the space $L_{per}^{2}\left(
Z;H_{\#}^{1}\left( Y\right) \right) $ with the norm%
\begin{equation*}
\left\Vert u\right\Vert _{L_{per}^{2}\left( Z;H_{\#}^{1}\left( Y\right)
\right) }=\left( \int_{Z}\int_{Y}\left\vert \nabla _{y}u\left( y,\tau
\right) \right\vert ^{2}dyd\tau \right) ^{\frac{1}{2}}\text{ }\left( u\in
L_{per}^{2}\left( Z;H_{\#}^{1}\left( Y\right) \right) \right)
\end{equation*}%
which is a Hilbert space.

Before we can recall the concept of two-scale convergence, let us introduce
one further notation. The letter $E$ throughout will denote a family of real
numbers $0<\varepsilon <1$ admitting $0$ as an accumulation point. For
example, $E$ may be the whole interval $\left( 0,1\right) $; $E$ may also be
an ordinary sequence $\left( \varepsilon _{n}\right) _{n\in \mathbb{N}}$
with $0<\varepsilon _{n}<1$ and $\varepsilon _{n}\rightarrow 0$ as $%
n\rightarrow \infty $. In the latter case $E$ will be referred to as a 
\textit{fundamental sequence}.

Let $\Omega $ be a bounded open set in $\mathbb{R}_{x}^{N}$ and $Q=\Omega
\times ]0,T[$ with $T\in \mathbb{R}_{+}^{\ast }$, and let $1\leq p<\infty $.

\begin{definition}
\label{def2.1} A sequence $\left( u_{\varepsilon }\right) _{\varepsilon \in
E}\subset L^{p}\left( Q\right) $ is said to:

(i) weakly two-scale converge in $L^{p}\left( Q\right) $ to some $u_{0}\in
L^{p}\left( Q;L_{per}^{p}\left( Y\times Z\right) \right) $ if as

\noindent $E\ni \varepsilon \rightarrow 0$, 
\begin{equation}
\int_{Q}u_{\varepsilon }\left( x,t\right) \psi ^{\varepsilon }\left(
x,t\right) dxdt\rightarrow \int \int \int_{Q\times Y\times Z}u_{0}\left(
x,t,y,\tau \right) \psi \left( x,t,y,\tau \right) dxdtdyd\tau  \label{eq3.1}
\end{equation}%
\begin{equation*}
\begin{array}{c}
\text{for all }\psi \in L^{p^{\prime }}\left( Q;\mathcal{C}_{per}\left(
Y\times Z\right) \right) \text{ }\left( \frac{1}{p^{\prime }}=1-\frac{1}{p}%
\right) \text{, where }\psi ^{\varepsilon }\left( x,t\right) = \\ 
\psi \left( x,t,\frac{x}{\varepsilon },\frac{t}{\varepsilon }\right) \text{ }%
\left( \left( x,t\right) \in Q\right) \text{;}%
\end{array}%
\end{equation*}

(ii) strongly two-scale converge in $L^{p}\left( Q\right) $ to some $%
u_{0}\in L^{p}\left( Q;L_{per}^{p}\left( Y\times Z\right) \right) $ if the
following property is verified: 
\begin{equation*}
\left\{ 
\begin{array}{c}
\text{Given }\eta >0\text{ and }v\in L^{p}\left( Q;\mathcal{C}_{per}\left(
Y\times Z\right) \right) \text{ with} \\ 
\left\Vert u_{0}-v\right\Vert _{L^{p}\left( Q\times Y\times Z\right) }\leq 
\frac{\eta }{2}\text{, there is some }\alpha >0\text{ such} \\ 
\text{that }\left\Vert u_{\varepsilon }-v^{\varepsilon }\right\Vert
_{L^{p}\left( Q\right) }\leq \eta \text{ provided }E\ni \varepsilon \leq
\alpha \text{.}%
\end{array}%
\right.
\end{equation*}
\end{definition}

We will briefly express weak and strong two-scale convergence by writing $%
u_{\varepsilon }\rightarrow u_{0}$ in $L^{p}\left( Q\right) $-weak $2$-$s$
and $u_{\varepsilon }\rightarrow u_{0}$ in $L^{p}\left( Q\right) $-strong $2$%
-$s$, respectively.

\begin{remark}
\label{rem2.1} It is of interest to know that if $u_{\varepsilon
}\rightarrow u_{0}$ in $L^{p}\left( Q\right) $-weak $2$-$s$, then (\ref%
{eq3.1}) holds for $\psi \in \mathcal{C}\left( \overline{Q};L_{per}^{\infty
}\left( Y\times Z\right) \right) $. See \cite[Proposition 10]{bib9} for the
proof.
\end{remark}

Instead of repeating here the main results underlying two-scale convergence,
we find it more convenient to draw the reader's attention to a few
references, see, e.g., \cite{bib1}, \cite{bib7}, \cite{bib9} and \cite{bib20}%
.

However, we recall below two fundamental results. First of all, let 
\begin{equation*}
\mathcal{Y}\left( 0,T\right) =\left\{ v\in L^{2}\left( 0,T;H_{0}^{1}\left(
\Omega ;\mathbb{R}\right) \right) :v^{\prime }\in L^{2}\left(
0,T;H^{-1}\left( \Omega ;\mathbb{R}\right) \right) \right\} \text{.}
\end{equation*}%
$\mathcal{Y}\left( 0,T\right) $ is provided with the norm 
\begin{equation*}
\left\Vert v\right\Vert _{\mathcal{Y}\left( 0,T\right) }=\left( \left\Vert
v\right\Vert _{L^{2}\left( 0,T;H_{0}^{1}\left( \Omega \right) \right)
}^{2}+\left\Vert v^{\prime }\right\Vert _{L^{2}\left( 0,T;H^{-1}\left(
\Omega \right) \right) }^{2}\right) ^{\frac{1}{2}}\qquad \left( v\in 
\mathcal{Y}\left( 0,T\right) \right)
\end{equation*}%
which makes it a Hilbert space.

\begin{theorem}
\label{th3.1} Assume that $1<p<\infty $ and further $E$ is a fundamental
sequence. Let a sequence $\left( u_{\varepsilon }\right) _{\varepsilon \in
E} $ be bounded in $L^{p}\left( Q\right) $. Then, a subsequence $E^{\prime }$
can be extracted from $E$ such that $\left( u_{\varepsilon }\right)
_{\varepsilon \in E^{\prime }}$ weakly two-scale converges in $L^{p}\left(
Q\right) $.
\end{theorem}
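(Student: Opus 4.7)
The plan is to cast the problem as weak-$*$ compactness for a family of bounded linear functionals on a separable Banach space, and then identify the limit functional with an element of $L^{p}(Q;L_{per}^{p}(Y\times Z))$.

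First, I would consider the test-function space $\mathcal{T}=L^{p^{\prime }}\left( Q;\mathcal{C}_{per}\left( Y\times Z\right) \right) $, which is a separable Banach space (since $\mathcal{C}_{per}\left( Y\times Z\right) $ is separable, being isomorphic to the continuous functions on the compact torus $Y\times Z$). For each $\varepsilon \in E$, define a linear form $L_{\varepsilon }$ on $\mathcal{T}$ by
\begin{equation*}
L_{\varepsilon }(\psi )=\int_{Q}u_{\varepsilon }(x,t)\,\psi ^{\varepsilon }(x,t)\,dxdt.
\end{equation*}
The first step is to show that $\{L_{\varepsilon }\}_{\varepsilon \in E}$ is bounded in the dual $\mathcal{T}^{\prime }$, uniformly in $\varepsilon $. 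By H\"older's inequality, $|L_{\varepsilon }(\psi )|\leq \Vert u_{\varepsilon }\Vert _{L^{p}(Q)}\Vert \psi ^{\varepsilon }\Vert _{L^{p^{\prime }}(Q)}$; the boundedness hypothesis on $(u_{\varepsilon })$ handles the first factor, while a standard mean-value estimate for two-scale test functions gives $\Vert \psi ^{\varepsilon }\Vert _{L^{p^{\prime }}(Q)}\leq C\Vert \psi \Vert _{\mathcal{T}}$ independently of $\varepsilon $. Thus $\sup_{\varepsilon \in E}\Vert L_{\varepsilon }\Vert _{\mathcal{T}^{\prime }}<\infty $.

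Since $\mathcal{T}$ is separable, the Banach--Alaoglu theorem yields a subsequence $E^{\prime }\subset E$ and a continuous linear form $L_{0}\in \mathcal{T}^{\prime }$ such that $L_{\varepsilon }(\psi )\rightarrow L_{0}(\psi )$ for every $\psi \in \mathcal{T}$ as $E^{\prime }\ni \varepsilon \rightarrow 0$. This gives the existence of a limit; it remains to represent $L_{0}$ by a function in $L^{p}(Q;L_{per}^{p}(Y\times Z))$.

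The identification step proceeds by refining the continuity of $L_{0}$. The key observation is the mean-value convergence
\begin{equation*}
\int_{Q}|\psi ^{\varepsilon }(x,t)|^{p^{\prime }}dxdt\longrightarrow \int \int \int_{Q\times Y\times Z}|\psi (x,t,y,\tau )|^{p^{\prime }}dxdtdyd\tau
\end{equation*}
for $\psi \in \mathcal{T}$ (proved first for finite sums $\sum \varphi _{i}(x,t)\chi _{i}(y,\tau )$ with $\varphi _{i}\in \mathcal{D}(Q)$ and $\chi _{i}\in \mathcal{C}_{per}(Y\times Z)$ via the Riemann--Lebesgue-type lemma for periodic functions, then extended by density). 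Combining this with the H\"older bound above and passing to the limit along $E^{\prime }$ gives
\begin{equation*}
|L_{0}(\psi )|\leq \Bigl(\limsup_{\varepsilon }\Vert u_{\varepsilon }\Vert _{L^{p}(Q)}\Bigr)\Vert \psi \Vert _{L^{p^{\prime }}(Q\times Y\times Z)}.
\end{equation*}
Since smooth finite sums of the above product form are dense in $L^{p^{\prime }}(Q\times Y\times Z)$, $L_{0}$ extends by continuity to a bounded form on $L^{p^{\prime }}(Q\times Y\times Z)$, and the Riesz representation theorem (applied to the reflexive Lebesgue space on the product, identified with $L^{p}(Q;L_{per}^{p}(Y\times Z))$) produces the required $u_{0}\in L^{p}(Q;L_{per}^{p}(Y\times Z))$ with
\begin{equation*}
L_{0}(\psi )=\int \int \int_{Q\times Y\times Z}u_{0}\,\psi \,dxdtdyd\tau .
\end{equation*}

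The main obstacle I anticipate is the density/identification step: one must be careful that the limit object $u_{0}$ lies in $L^{p}(Q;L_{per}^{p}(Y\times Z))$ (an $L^{p}$ space on the product), while the compactness is extracted against the smaller test space $\mathcal{T}$ involving continuous-in-$(y,\tau )$ functions. The mean-value lemma for periodic oscillations is what bridges these two scales of regularity; without it the weak-$*$ limit would only be known as an abstract functional, not as an integrable function on $Q\times Y\times Z$.
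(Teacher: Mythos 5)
Your argument is correct: the paper does not prove Theorem \ref{th3.1} itself but delegates it to \cite{bib1} and \cite{bib7}, and your proof---uniform boundedness of the functionals $L_{\varepsilon }$ on the separable space $L^{p^{\prime }}\left( Q;\mathcal{C}_{per}\left( Y\times Z\right) \right) $, weak-$*$ sequential compactness, the mean-value (Riemann--Lebesgue) lemma to upgrade the continuity of the limit functional to the $L^{p^{\prime }}\left( Q\times Y\times Z\right) $ norm, and Riesz representation in the reflexive range $1<p<\infty $---is precisely the classical argument given in those references. The only cosmetic remark is that the bound $\left\Vert \psi ^{\varepsilon }\right\Vert _{L^{p^{\prime }}\left( Q\right) }\leq \left\Vert \psi \right\Vert _{\mathcal{T}}$ already follows from the pointwise estimate $\left\vert \psi ^{\varepsilon }\left( x,t\right) \right\vert \leq \sup_{\left( y,\tau \right) }\left\vert \psi \left( x,t,y,\tau \right) \right\vert $, so no mean-value estimate is needed at that first step.
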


\begin{theorem}
\label{th3.2} Let $E$ be a fundamental sequence. Suppose a sequence $\left(
u_{\varepsilon }\right) _{\varepsilon \in E}$ is bounded in $\mathcal{Y}%
\left( 0,T\right) $. Then, a subsequence $E^{\prime }$ can be extracted from 
$E$ such that, as $E^{\prime }\ni \varepsilon \rightarrow 0$, 
\begin{equation*}
u_{\varepsilon }\rightarrow u_{0}\text{ in }\mathcal{Y}\left( 0,T\right) 
\text{-weak,\qquad \qquad \qquad \qquad \qquad \qquad }
\end{equation*}%
\begin{equation*}
u_{\varepsilon }\rightarrow u_{0}\text{ in }L^{2}\left( Q\right) \text{-weak 
}2\text{-}s\text{,\qquad \qquad \qquad \qquad \qquad }\quad
\end{equation*}%
\begin{equation*}
\frac{\partial u_{\varepsilon }}{\partial x_{j}}\rightarrow \frac{\partial
u_{0}}{\partial x_{j}}+\frac{\partial u_{1}}{\partial y_{j}}\text{ in }%
L^{2}\left( Q\right) \text{-weak }2\text{-}s\text{ }\left( 1\leq j\leq
N\right) \text{,}
\end{equation*}%
where $u_{0}\in \mathcal{Y}\left( 0,T\right) $, $u_{1}\in L^{2}\left(
Q;L_{per}^{2}\left( Z;H_{\#}^{1}\left( Y\right) \right) \right) $.
\end{theorem}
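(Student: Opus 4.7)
The plan is to extract the required subsequence in three stages, using reflexivity, the Aubin--Lions compactness lemma, and the Nguetseng-type compactness furnished by Theorem \ref{th3.1}.

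First I would obtain the $\mathcal{Y}(0,T)$-weak convergence and identify $u_0$ as a genuine function of $(x,t)$ alone. Since $\mathcal{Y}(0,T)$ is a reflexive Hilbert space, boundedness gives, after a first extraction from $E$, weak convergence $u_\varepsilon\to u_0$ in $\mathcal{Y}(0,T)$. Because $H_0^1(\Omega)\hookrightarrow L^2(\Omega)$ compactly and $L^2(\Omega)\hookrightarrow H^{-1}(\Omega)$ continuously, Aubin--Lions implies that $\mathcal{Y}(0,T)\hookrightarrow L^2(Q)$ is compact, so a further extraction gives $u_\varepsilon\to u_0$ strongly in $L^2(Q)$. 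Strong $L^2(Q)$-convergence upgrades to strong two-scale convergence with the limit regarded as constant in $(y,\tau)$, hence in particular to $u_\varepsilon\to u_0$ in $L^2(Q)$-weak $2$-$s$.

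Next, each $\partial u_\varepsilon/\partial x_j$ is bounded in $L^2(Q)$, so Theorem \ref{th3.1} applied $N$ times, together with a diagonal extraction, yields $\chi_j\in L^2(Q;L_{per}^2(Y\times Z))$ with $\partial u_\varepsilon/\partial x_j\to\chi_j$ in $L^2(Q)$-weak $2$-$s$. To identify $\chi_j$, take any vector test function $\mathbf{\Phi}=(\Phi_j)\in\mathcal{D}(Q;\mathcal{C}_{per}^{\infty}(Y\times Z))^N$ satisfying $\sum_j\partial\Phi_j/\partial y_j=0$, integrate by parts in $x$, and use $(\partial\Phi_j^\varepsilon/\partial x_j)=(\partial\Phi_j/\partial x_j)^\varepsilon+(1/\varepsilon)(\partial\Phi_j/\partial y_j)^\varepsilon$ to obtain
\begin{equation*}
\sum_j\int_Q\frac{\partial u_\varepsilon}{\partial x_j}\Phi_j^\varepsilon\,dxdt=-\sum_j\int_Q u_\varepsilon\left(\frac{\partial\Phi_j}{\partial x_j}\right)^\varepsilon dxdt-\frac{1}{\varepsilon}\int_Q u_\varepsilon\left(\sum_j\frac{\partial\Phi_j}{\partial y_j}\right)^\varepsilon dxdt,
\end{equation*}
the last term vanishing by the $y$-solenoidal assumption on $\mathbf{\Phi}$. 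Passing to the weak two-scale limit, integrating once more by parts in $x$ on the right, and exploiting that $u_0$ does not depend on $(y,\tau)$, one arrives at
\begin{equation*}
\sum_j\int\int\int_{Q\times Y\times Z}\left(\chi_j-\frac{\partial u_0}{\partial x_j}\right)\Phi_j\,dxdtdyd\tau=0
\end{equation*}
for every such $\mathbf{\Phi}$. The classical orthogonal decomposition of $L_{per}^2(Y\times Z)^N$ into $y$-gradients and $y$-solenoidal fields (applied pointwise in $(x,t,\tau)$) then furnishes $u_1\in L^2(Q;L_{per}^2(Z;H_{\#}^1(Y)))$ with $\chi_j-\partial u_0/\partial x_j=\partial u_1/\partial y_j$, as required.

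The main technical obstacle is the identification of $u_0$ as independent of $(y,\tau)$, and this is precisely where Aubin--Lions enters. A naive ``integrate against $\varepsilon\psi^\varepsilon$'' argument of Nguetseng type rules out $y$-dependence of the two-scale limit when $\psi$ depends only on $y$, and similarly rules out $\tau$-dependence when $\psi$ depends only on $\tau$, but it breaks down for joint $(y,\tau)$-test functions, since then $\|\psi^\varepsilon\|_{L^2(0,T;H_0^1(\Omega))}$ blows up like $1/\varepsilon$ and exactly absorbs the $\varepsilon$ prefactor. Passing through strong $L^2(Q)$-compactness, which invokes the $L^2(0,T;H^{-1}(\Omega))$ bound on $\partial u_\varepsilon/\partial t$ built into $\mathcal{Y}(0,T)$, circumvents this difficulty in one stroke.
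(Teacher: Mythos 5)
The paper never proves Theorem \ref{th3.2}: it defers entirely to \cite{bib9} and \cite{bib15}, so there is no internal argument to compare against. Your proof is essentially the standard one from those sources, and it is correct in its main lines: reflexivity gives the $\mathcal{Y}(0,T)$-weak limit; Aubin--Lions (using the $L^{2}(0,T;H^{-1}(\Omega))$ bound on $u_{\varepsilon }'$ built into $\mathcal{Y}(0,T)$) gives strong $L^{2}(Q)$ convergence, which forces the weak two-scale limit of $u_{\varepsilon }$ to be $u_{0}(x,t)$, independent of $(y,\tau )$; Theorem \ref{th3.1} yields two-scale limits $\chi _{j}$ of the gradients; and testing against $y$-solenoidal periodic fields, integrating by parts, and invoking the Helmholtz decomposition of $L_{per}^{2}(Y)^{N}$ identifies $\chi _{j}-\partial u_{0}/\partial x_{j}$ as $\partial u_{1}/\partial y_{j}$. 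Two points deserve a sentence each in a full write-up: (i) passing from the pointwise (in $(x,t,\tau )$) Helmholtz decomposition to a single $u_{1}\in L^{2}(Q;L_{per}^{2}(Z;H_{\#}^{1}(Y)))$ requires observing that $u_{1}$ is produced from $V=(\chi _{j}-\partial u_{0}/\partial x_{j})$ by a fixed bounded linear solution operator, which preserves measurability and square-integrability; (ii) one should record that smooth $y$-solenoidal fields are dense in the $L^{2}$ $y$-solenoidal fields, so the orthogonality relation genuinely characterizes $V$ as a $y$-gradient. Finally, your closing heuristic is slightly off: the ``$\varepsilon \psi ^{\varepsilon }$'' argument rules out $y$-dependence of the two-scale limit even for jointly $(y,\tau )$-dependent test functions, since only the $L^{2}$ gradient bound is used there; what actually requires extra input is ruling out $\tau $-dependence, because pairing with $u_{\varepsilon }'$ costs an $H_{0}^{1}(\Omega )$-norm of the test function and hence a factor $1/\varepsilon $ whenever the test function depends on $y$. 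This misstatement is harmless, as your proof correctly bypasses the whole issue through strong $L^{2}(Q)$ compactness.
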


The proof of Theorem \ref{th3.1} can be found in, e.g., \cite{bib1}, \cite%
{bib7}, whereas Theorem \ref{th3.2} has its proof in, e.g., \cite{bib9} and 
\cite{bib15}.

\subsection{\textbf{A global homogenization theorem}}

Before we can establish a so-called global homogenization theorem for (\ref%
{eq1.3})-(\ref{eq1.6}), we require a few basic notation and results. To
begin, let 
\begin{equation*}
\mathcal{V}_{Y}=\left\{ \mathbf{\psi }\in \mathcal{C}_{per}^{\infty }\left(
Y;\mathbb{R}\right) ^{N}:\int_{Y}\mathbf{\psi }\left( y\right) dy=0,\text{ }%
div_{y}\mathbf{\psi =}0\right\} \text{, }
\end{equation*}%
\begin{equation*}
V_{Y}=\left\{ \mathbf{w}\in H_{\#}^{1}\left( Y;\mathbb{R}\right) ^{N}:div_{y}%
\mathbf{w=}0\right\} \text{, }
\end{equation*}%
where: $\mathcal{C}_{per}^{\infty }\left( Y;\mathbb{R}\right) =\mathcal{C}%
^{\infty }\left( \mathbb{R}^{N};\mathbb{R}\right) \cap \mathcal{C}%
_{per}\left( Y\right) $, $div_{y}$ denotes the divergence operator in $%
\mathbb{R}_{y}^{N}$. We provide $V_{Y}$ with the $H_{\#}^{1}\left( Y\right)
^{N}$-norm, which makes it a Hilbert space. There is no difficulty in
verifying that $\mathcal{V}_{Y}$ is dense in $V_{Y}$ (proceed as in \cite[%
Proposition 3.2]{bib14}). With this in mind, set 
\begin{equation*}
\mathbb{F}_{0}^{1}=L^{2}\left( 0,T;V\right) \times L^{2}\left(
Q;L_{per}^{2}\left( Z;V_{Y}\right) \right) \text{.}
\end{equation*}%
This is a Hilbert space with norm 
\begin{equation*}
\left\Vert \mathbf{v}\right\Vert _{\mathbb{F}_{0}^{1}}=\left( \left\Vert 
\mathbf{v}_{0}\right\Vert _{L^{2}\left( 0,T;V\right) }^{2}+\left\Vert 
\mathbf{v}_{1}\right\Vert _{L^{2}\left( Q;L_{per}^{2}\left( Z;V_{Y}\right)
\right) }^{2}\right) ^{\frac{1}{2}}\text{, }\mathbf{v=}\left( \mathbf{v}_{0},%
\mathbf{v}_{1}\right) \in \mathbb{F}_{0}^{1}\text{.}
\end{equation*}%
On the other hand, put 
\begin{equation*}
\mathbf{\tciFourier }_{0}^{\infty }=\mathcal{D}\left( 0,T;\mathcal{V}\right)
\times \left[ \mathcal{D}\left( Q;\mathbb{R}\right) \otimes \left[ \mathcal{C%
}_{per}^{\infty }\left( Z;\mathbb{R}\right) \otimes \mathcal{V}_{Y}\right] %
\right] \text{,}
\end{equation*}%
where $\mathcal{C}_{per}^{\infty }\left( Z;\mathbb{R}\right) =\mathcal{C}%
^{\infty }\left( \mathbb{R};\mathbb{R}\right) \cap \mathcal{C}_{per}\left(
Z\right) $, $\mathcal{C}_{per}^{\infty }\left( Z;\mathbb{R}\right) \otimes 
\mathcal{V}_{Y}$ stands for the space of vector functions $\mathbf{w}$ on $%
\mathbb{R}_{y}^{N}\times \mathbb{R}_{\tau }$ of the form%
\begin{equation*}
\mathbf{w}\left( y,\tau \right) =\sum_{finite}\chi _{i}\left( \tau \right) 
\mathbf{v}_{i}\left( y\right) \text{ \ }\left( \tau \in \mathbb{R},\text{ }%
y\in \mathbb{R}^{N}\right)
\end{equation*}%
with $\chi _{i}\in \mathcal{C}_{per}^{\infty }\left( Z;\mathbb{R}\right) $, $%
\mathbf{v}_{i}\in \mathcal{V}_{Y}$, and where $\mathcal{D}\left( Q;\mathbb{R}%
\right) \otimes \left( \mathcal{C}_{per}^{\infty }\left( Z;\mathbb{R}\right)
\otimes \mathcal{V}_{Y}\right) $ is the space of vector functions on $%
Q\times \mathbb{R}_{y}^{N}\times \mathbb{R}$ of the form 
\begin{equation*}
\mathbf{\psi }\left( x,t,y,\tau \right) =\sum_{finite}\varphi _{i}\left(
x,t\right) \mathbf{w}_{i}\left( y,\tau \right) \text{ }\left( \left(
x,t\right) \in Q,\text{ }\left( y,\tau \right) \in \mathbb{R}^{N}\times 
\mathbb{R}\right)
\end{equation*}%
with $\varphi _{i}\in \mathcal{D}\left( Q;\mathbb{R}\right) $, $\mathbf{w}%
_{i}\in \mathcal{C}_{per}^{\infty }\left( Z;\mathbb{R}\right) \otimes 
\mathcal{V}_{Y}$. Since $\mathcal{V}$ is dense in $V$ (see \cite[p.18]{bib18}%
), it is clear that $\mathbf{\tciFourier }_{0}^{\infty }$ is dense in $%
\mathbb{F}_{0}^{1}$.

Now, let 
\begin{equation*}
\mathbb{U}=V\times L^{2}\left( \Omega ;L_{per}^{2}\left( Z;V_{Y}\right)
\right) \text{.}
\end{equation*}%
Provided with the norm%
\begin{equation*}
\left\Vert \mathbf{v}\right\Vert _{\mathbb{U}}=\left( \left\Vert \mathbf{v}%
_{0}\right\Vert ^{2}+\left\Vert \mathbf{v}_{1}\right\Vert _{L^{2}\left(
\Omega ;L_{per}^{2}\left( Z;V_{Y}\right) \right) }^{2}\right) ^{\frac{1}{2}%
}\qquad \left( \mathbf{v}=\left( \mathbf{v}_{0},\mathbf{v}_{1}\right) \in 
\mathbb{U}\right) \text{,}
\end{equation*}%
$\mathbb{U}$ is a Hilbert space. Let us set%
\begin{equation*}
\widehat{a}_{\Omega }\left( \mathbf{u},\mathbf{v}\right)
=\sum_{i,j,k=1}^{N}\int \int \int_{\Omega \times Y\times Z}a_{ij}\left( 
\frac{\partial u_{0}^{k}}{\partial x_{j}}+\frac{\partial u_{1}^{k}}{\partial
y_{j}}\right) \left( \frac{\partial v_{0}^{k}}{\partial x_{i}}+\frac{%
\partial v_{1}^{k}}{\partial y_{i}}\right) dxdyd\tau
\end{equation*}%
for $\mathbf{u=}\left( \mathbf{u}_{0},\mathbf{u}_{1}\right) $ and $\mathbf{v=%
}\left( \mathbf{v}_{0},\mathbf{v}_{1}\right) $ in $\mathbb{U}$. This defines
a symmetric continuous bilinear form $\widehat{a}_{\Omega }$ on $\mathbb{U}%
\times \mathbb{U}$. Furthermore, $\widehat{a}_{\Omega }$ is $\mathbb{U}$%
-elliptic. Specifically, 
\begin{equation}
\widehat{a}_{\Omega }\left( \mathbf{u},\mathbf{u}\right) \geq \alpha
\left\Vert \mathbf{u}\right\Vert _{\mathbb{U}}^{2}\text{ }\left( \mathbf{u}%
\in \mathbb{U}\right)  \label{eq3.6}
\end{equation}%
as is easily checked by using (\ref{eq1.2}) and the fact that $\int_{Y}\frac{%
\partial u_{1}^{k}}{\partial y_{j}}\left( x,y,\tau \right) dy=0$.

Here is one fundamental lemma.

\begin{lemma}
\label{lem3.1} Under the hypotheses (\ref{eq1.1})-(\ref{eq1.2}). The
variational problem 
\begin{equation}
\left\{ 
\begin{array}{c}
\mathbf{u}_{0}\in \mathcal{W}\left( 0,T\right) \text{ with }\mathbf{u}%
_{0}\left( 0\right) =0\text{;}\qquad \qquad \qquad \qquad \qquad \qquad \quad
\\ 
\mathbf{u}=\left( \mathbf{u}_{0},\mathbf{u}_{1}\right) \in \mathbb{F}%
_{0}^{1}:\qquad \qquad \qquad \qquad \qquad \qquad \qquad \qquad \quad \\ 
\int_{0}^{T}\left( \mathbf{u}_{0}^{\prime }\left( t\right) ,\mathbf{v}%
_{0}\left( t\right) \right) dt+\int_{0}^{T}\widehat{a}_{\Omega }\left( 
\mathbf{u}\left( t\right) ,\mathbf{v}\left( t\right) \right)
dt=\int_{0}^{T}\left( \mathbf{f}\left( t\right) ,\mathbf{v}_{0}\left(
t\right) \right) dt \\ 
\text{for all }\mathbf{v=}\left( \mathbf{v}_{0},\mathbf{v}_{1}\right) \in 
\mathbb{F}_{0}^{1}%
\end{array}%
\right.  \label{eq3.2}
\end{equation}%
has at most one solution.
\end{lemma}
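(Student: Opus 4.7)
The plan is the standard linear-uniqueness argument: take the difference of two solutions, test against itself, and exploit the parabolic identity together with the coercivity (\ref{eq3.6}) of $\widehat{a}_\Omega$.

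Let $\mathbf{u}=(\mathbf{u}_{0},\mathbf{u}_{1})$ and $\widetilde{\mathbf{u}}=(\widetilde{\mathbf{u}}_{0},\widetilde{\mathbf{u}}_{1})$ be two solutions of (\ref{eq3.2}), and set $\mathbf{w}=(\mathbf{w}_{0},\mathbf{w}_{1})=\mathbf{u}-\widetilde{\mathbf{u}}$. Then $\mathbf{w}\in\mathbb{F}_{0}^{1}$, $\mathbf{w}_{0}\in\mathcal{W}(0,T)$, $\mathbf{w}_{0}(0)=0$, and by linearity $\mathbf{w}$ satisfies the homogeneous variational equation
\begin{equation*}
\int_{0}^{T}\left(\mathbf{w}_{0}^{\prime}(t),\mathbf{v}_{0}(t)\right)dt+\int_{0}^{T}\widehat{a}_{\Omega}\!\left(\mathbf{w}(t),\mathbf{v}(t)\right)dt=0\qquad\text{for all }\mathbf{v}=(\mathbf{v}_{0},\mathbf{v}_{1})\in\mathbb{F}_{0}^{1}.
\end{equation*}

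The key step is to test with $\mathbf{v}=\mathbf{w}$, which is legitimate since $\mathbf{w}\in\mathbb{F}_{0}^{1}$. For the first term I would invoke the classical identity available for elements of $\mathcal{W}(0,T)$ in the Gelfand triple $V\hookrightarrow H\hookrightarrow V^{\prime}$ (see \cite[pp.\,254--260]{bib18}), namely
\begin{equation*}
\int_{0}^{T}\left(\mathbf{w}_{0}^{\prime}(t),\mathbf{w}_{0}(t)\right)dt=\tfrac{1}{2}\left|\mathbf{w}_{0}(T)\right|^{2}-\tfrac{1}{2}\left|\mathbf{w}_{0}(0)\right|^{2}=\tfrac{1}{2}\left|\mathbf{w}_{0}(T)\right|^{2},
\end{equation*}
using $\mathbf{w}_{0}(0)=0$. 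For the second term I use the $\mathbb{U}$-ellipticity (\ref{eq3.6}) of $\widehat{a}_{\Omega}$ pointwise in $t$ to get $\int_{0}^{T}\widehat{a}_{\Omega}(\mathbf{w}(t),\mathbf{w}(t))\,dt\geq\alpha\int_{0}^{T}\|\mathbf{w}(t)\|_{\mathbb{U}}^{2}\,dt$.

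Putting the two together yields
\begin{equation*}
\tfrac{1}{2}\left|\mathbf{w}_{0}(T)\right|^{2}+\alpha\int_{0}^{T}\|\mathbf{w}(t)\|_{\mathbb{U}}^{2}\,dt\leq 0,
\end{equation*}
which forces $\mathbf{w}_{0}=0$ in $L^{2}(0,T;V)$ and $\mathbf{w}_{1}=0$ in $L^{2}(Q;L_{per}^{2}(Z;V_{Y}))$, i.e.\ $\mathbf{u}=\widetilde{\mathbf{u}}$. I do not anticipate any real obstacle: the only mildly delicate point is the $\frac{d}{dt}|\mathbf{w}_{0}|^{2}=2(\mathbf{w}_{0}^{\prime},\mathbf{w}_{0})$ identity, which is standard for $\mathcal{W}(0,T)$ and is already quoted from \cite{bib18} in the paper; ellipticity was verified just above the lemma in (\ref{eq3.6}), and the density of $\mathbf{\tciFourier}_{0}^{\infty}$ in $\mathbb{F}_{0}^{1}$ noted earlier guarantees that the variational equation extends to all admissible test pairs, so the test choice $\mathbf{v}=\mathbf{w}$ requires no additional approximation argument.
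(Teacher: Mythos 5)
Your proof is correct and follows essentially the same route as the paper's: form the difference of two solutions, test the homogeneous equation against the difference itself, and combine the parabolic identity $\int_0^T(\mathbf{w}_0',\mathbf{w}_0)\,dt=\tfrac12|\mathbf{w}_0(T)|^2$ with the $\mathbb{U}$-ellipticity (\ref{eq3.6}). The only (harmless) difference is that the paper first localizes in time via test functions $\varphi\otimes\mathbf{v}_*$ to obtain a pointwise differential inequality and then integrates, whereas you test directly in the time-integrated formulation; your closing remark about density of $\mathbf{\tciFourier}_{0}^{\infty}$ is unnecessary since (\ref{eq3.2}) is already posed for all $\mathbf{v}\in\mathbb{F}_{0}^{1}$.
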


\begin{proof}
Let $\mathbf{v}_{\ast }=\left( \mathbf{v}_{0},\mathbf{v}_{1}\right) \in 
\mathbb{U}$ and $\varphi \in \mathcal{D}\left( ]0,T[\right) $. By taking $%
\mathbf{v=}\varphi \otimes \mathbf{v}_{\ast }$ in (\ref{eq3.2}), we arrive
at 
\begin{equation}
\left( \mathbf{u}_{0}^{\prime }\left( t\right) ,\mathbf{v}_{0}\right) +%
\widehat{a}_{\Omega }\left( \mathbf{u}\left( t\right) ,\mathbf{v}_{\ast
}\right) =\left( \mathbf{f}\left( t\right) ,\mathbf{v}_{0}\right) \text{%
\qquad }\left( \mathbf{v}_{\ast }\in \mathbb{U}\right)  \label{eq3.3}
\end{equation}%
for almost all $t\in \left( 0,T\right) $. Suppose that $\mathbf{u}_{\ast }$
and\textbf{\ }$\mathbf{u}_{\ast \ast }$ are two solutions of (\ref{eq3.2})
with $\mathbf{u}_{\ast }=\left( \mathbf{u}_{\ast 0},\mathbf{u}_{\ast
1}\right) $ and $\mathbf{u}_{\ast \ast }=\left( \mathbf{u}_{\ast \ast 0},%
\mathbf{u}_{\ast \ast 1}\right) $. Let $\mathbf{u=u}_{\ast }-\mathbf{u}%
_{\ast \ast }=\left( \mathbf{u}_{0},\mathbf{u}_{1}\right) $ with $\mathbf{u}%
_{0}=\mathbf{u}_{\ast 0}-\mathbf{u}_{\ast \ast 0}$ and $\mathbf{u}_{1}=%
\mathbf{u}_{\ast 1}-\mathbf{u}_{\ast \ast 1}$. Let us show that $\mathbf{u=}%
0 $. By\ using (\ref{eq3.3}) we see that $\mathbf{u}$ verifies: 
\begin{equation}
\left( \mathbf{u}_{0}^{\prime }\left( t\right) ,\mathbf{v}_{0}\right) +%
\widehat{a}_{\Omega }\left( \mathbf{u}\left( t\right) ,\mathbf{v}_{\ast
}\right) =0  \label{eq3.4}
\end{equation}%
for all $\mathbf{v}_{\ast }\in \mathbb{U}$ and for almost all $t\in \left(
0,T\right) $. But, by virtue of \cite[p. 261]{bib18} 
\begin{equation*}
\frac{d}{dt}\left\vert \mathbf{u}_{0}\left( t\right) \right\vert
^{2}=2\left( \mathbf{u}_{0}^{\prime }\left( t\right) ,\mathbf{u}_{0}\left(
t\right) \right) \text{\qquad }
\end{equation*}%
for almost all $t\in \left( 0,T\right) $. Then, taking $\mathbf{v}_{\ast }%
\mathbf{=u}\left( t\right) $ in (\ref{eq3.4}), we obtain by\ (\ref{eq3.6}) 
\begin{equation}
\frac{d}{dt}\left\vert \mathbf{u}_{0}\left( t\right) \right\vert
^{2}+2\alpha \left\Vert \mathbf{u}\left( t\right) \right\Vert _{\mathbb{U}%
}^{2}\leq 0\text{\qquad }  \label{eq3.5}
\end{equation}%
for almost all $t\in \left( 0,T\right) $. Integrating (\ref{eq3.5}) on $%
\left[ 0,t\right] $ $\left( 0\leq t\leq T\right) $, we get $\left\vert 
\mathbf{u}_{0}\left( t\right) \right\vert ^{2}\leq 0$ for all $t\in \left[
0,T\right] $ and $\left\Vert \mathbf{u}\right\Vert _{\mathbb{F}%
_{0}^{1}}^{2}\leq 0$, thus $\mathbf{u}=0$ and the lemma follows.
\end{proof}

We are now able to prove the desired theorem. Throughout the remainder of
the present section, it is assumed that $a_{ij}$ is $Y$-periodic for any $%
1\leq i,j\leq N$.

\begin{theorem}
\label{th3.3} Suppose that the hypotheses of Lemma \ref{lem2.1} are
satisfied. For $0<\varepsilon <1$, let $\mathbf{u}_{\varepsilon }$ be
defined by (\ref{eq1.3})-(\ref{eq1.6}). Then, as $\varepsilon \rightarrow 0$
we have%
\begin{equation}
\mathbf{u}_{\varepsilon }\rightarrow \mathbf{u}_{0}\text{ in }\mathcal{W}%
\left( 0,T\right) \text{-weak,\qquad \qquad \qquad \qquad \qquad \qquad }
\label{eq3.7}
\end{equation}%
\begin{equation}
\frac{\partial u_{\varepsilon }^{k}}{\partial x_{j}}\rightarrow \frac{%
\partial u_{0}^{k}}{\partial x_{j}}+\frac{\partial u_{1}^{k}}{\partial y_{j}}%
\text{ in }L^{2}\left( Q\right) \text{-weak }2\text{-}s\text{ }\left( 1\leq
j,k\leq N\right)  \label{eq3.8}
\end{equation}%
where $\mathbf{u=}\left( \mathbf{u}_{0},\mathbf{u}_{1}\right) $ (with $%
\mathbf{u}_{0}=\left( u_{0}^{k}\right) $ and $\mathbf{u}_{1}=\left(
u_{1}^{k}\right) $) is the unique solution of (\ref{eq3.2}).
\end{theorem}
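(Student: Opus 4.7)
The plan is to combine the a priori bounds from Proposition \ref{pr2.2} with the two-scale compactness result Theorem \ref{th3.2}, and then to pass to the limit in the weak formulation using oscillating test functions. By (\ref{eq2.5})--(\ref{eq2.7}), $(\mathbf{u}_\varepsilon)$ is bounded in $\mathcal{W}(0,T)$ and $(p_\varepsilon)$ is bounded in $L^2(Q)$. Fix a fundamental sequence $E\subset(0,1)$. Applying Theorem \ref{th3.2} componentwise, I can extract $E'\subset E$ and find $\mathbf{u}_0\in\mathcal{W}(0,T)$ and $\mathbf{u}_1\in L^2(Q;L^2_{per}(Z;H^1_\#(Y))^N)$ for which (\ref{eq3.7}) and (\ref{eq3.8}) hold; along a further subsequence I also get $p_\varepsilon\to p_0$ in $L^2(Q)$-weak $2$-$s$. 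The initial condition $\mathbf{u}_0(0)=0$ comes from the continuous embedding $\mathcal{W}(0,T)\hookrightarrow\mathcal{C}([0,T];L^2(\Omega)^N)$, which makes the trace at $t=0$ weakly continuous.

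Next I verify the two divergence constraints. Since $\operatorname{div}_x\mathbf{u}_\varepsilon=0$, summing (\ref{eq3.8}) over $k=j$ yields $\operatorname{div}_x\mathbf{u}_0+\operatorname{div}_y\mathbf{u}_1=0$ as an element of $L^2(Q\times Y\times Z)$. Averaging over $Y\times Z$ and using the periodicity of $\mathbf{u}_1$ gives $\operatorname{div}_x\mathbf{u}_0=0$, i.e., $\mathbf{u}_0(t)\in V$ for a.e.\ $t$; it then follows that $\operatorname{div}_y\mathbf{u}_1=0$, so $\mathbf{u}_1(x,t,\cdot,\tau)\in V_Y$ a.e. Thus $(\mathbf{u}_0,\mathbf{u}_1)\in\mathbb{F}_0^1$.

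To pass to the limit, I take as test function
\begin{equation*}
\boldsymbol{\Phi}^\varepsilon(x,t)=\boldsymbol{\phi}_0(x,t)+\varepsilon\,\boldsymbol{\phi}_1\!\left(x,t,\tfrac{x}{\varepsilon},\tfrac{t}{\varepsilon}\right),
\end{equation*}
with $\boldsymbol{\phi}_0\in\mathcal{D}(0,T;\mathcal{V})$ and $\boldsymbol{\phi}_1\in\mathcal{D}(Q;\mathbb{R})\otimes[\mathcal{C}^\infty_{per}(Z;\mathbb{R})\otimes\mathcal{V}_Y]$. Since $\operatorname{div}_x\boldsymbol{\phi}_0=0$ and $\operatorname{div}_y\boldsymbol{\phi}_1=0$, one has $\operatorname{div}_x\boldsymbol{\Phi}^\varepsilon=\varepsilon\,(\operatorname{div}_x\boldsymbol{\phi}_1)^\varepsilon$, so testing (\ref{eq1.3}) against $\boldsymbol{\Phi}^\varepsilon$ and integrating in $t$ yields the pressure contribution $\varepsilon\int_Q p_\varepsilon(\operatorname{div}_x\boldsymbol{\phi}_1)^\varepsilon\,dx\,dt=O(\varepsilon)$ thanks to (\ref{eq2.7}). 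After an integration by parts in $t$ (the boundary terms vanish because $\boldsymbol{\phi}_0,\boldsymbol{\phi}_1$ have compact support in $t$), the time-derivative term becomes $-\int_Q \mathbf{u}_\varepsilon\cdot\partial_t\boldsymbol{\Phi}^\varepsilon\,dx\,dt$, in which the $\partial_\tau\boldsymbol{\phi}_1$ contribution passes by weak $2$-$s$ convergence to $-\int\!\!\int\!\!\int_{Q\times Y\times Z}\mathbf{u}_0\cdot\partial_\tau\boldsymbol{\phi}_1\,dx\,dt\,dy\,d\tau=0$ (periodicity in $\tau$), while the $\partial_t\boldsymbol{\phi}_0$ piece converges by weak convergence of $\mathbf{u}_\varepsilon$ to $-\int_Q\mathbf{u}_0\cdot\partial_t\boldsymbol{\phi}_0\,dx\,dt=\int_0^T(\mathbf{u}_0',\boldsymbol{\phi}_0)\,dt$. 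For the bilinear term, using (\ref{eq3.8}) and the admissibility of $a_{ij}(y)\,\partial\boldsymbol{\phi}_0^k/\partial x_i$ and $a_{ij}(y)\,\partial\boldsymbol{\phi}_1^k/\partial y_i$ as test functions for two-scale convergence, I obtain precisely $\int_0^T \widehat{a}_\Omega(\mathbf{u}(t),\boldsymbol{\phi}(t))\,dt$ in the limit. The right-hand side converges to $\int_0^T(\mathbf{f},\boldsymbol{\phi}_0)\,dt$ since $\varepsilon\boldsymbol{\phi}_1^\varepsilon\to 0$ in $L^2(0,T;V)$. Therefore $(\mathbf{u}_0,\mathbf{u}_1)$ satisfies the variational equation in (\ref{eq3.2}) for every $\boldsymbol{\phi}\in\boldsymbol{\mathcal{F}}_0^\infty$, and by density of $\boldsymbol{\mathcal{F}}_0^\infty$ in $\mathbb{F}_0^1$, for every $\boldsymbol{\phi}\in\mathbb{F}_0^1$.

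By Lemma \ref{lem3.1}, the solution of (\ref{eq3.2}) is unique, so the limits $(\mathbf{u}_0,\mathbf{u}_1)$ do not depend on the extracted subsequence $E'$; the usual argument then upgrades convergence along $E'$ to convergence along the whole family $E$, yielding (\ref{eq3.7})--(\ref{eq3.8}). The main technical obstacle I anticipate is the careful handling of the divergence defect of $\boldsymbol{\Phi}^\varepsilon$: the test function is only $O(\varepsilon)$-divergence-free, so the a priori $L^2$-bound on the pressure from (\ref{eq2.7}) is essential to kill the spurious term, and a second delicate point is verifying that $\operatorname{div}_y\mathbf{u}_1=0$ (so that $\mathbf{u}_1$ lies in the cell space $V_Y$ appearing in $\mathbb{F}_0^1$), which I handle by two-scale passage in the incompressibility constraint rather than by a direct calculation.
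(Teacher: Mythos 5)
Your proposal is correct and follows the same overall strategy as the paper: a priori bounds from Proposition \ref{pr2.2}, extraction via Theorems \ref{th3.1}--\ref{th3.2}, oscillating test functions $\mathbf{\psi }_{0}+\varepsilon \mathbf{\psi }_{1}^{\varepsilon }$, term-by-term passage to the limit, and then uniqueness from Lemma \ref{lem3.1} to discharge the subsequence. The one genuine point of divergence is your choice of test functions: you take $\mathbf{\psi }_{0}\in \mathcal{D}\left( 0,T;\mathcal{V}\right) $ (divergence-free) from the outset, so that $div\mathbf{\Phi }_{\varepsilon }=\varepsilon \left( div_{x}\mathbf{\psi }_{1}\right) ^{\varepsilon }$ and the pressure term is killed by the bound (\ref{eq2.7}); the paper instead tests against general $\mathbf{\psi }_{0}\in \mathcal{D}\left( Q;\mathbb{R}\right) ^{N}$, keeps the pressure term, and first derives the intermediate identity (\ref{eq3.15}) containing $p_{0}$, only afterwards restricting to divergence-free $\mathbf{\psi }_{0}$ to reach (\ref{eq3.2}). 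Your route is more economical for Theorem \ref{th3.3} itself, but it discards the information on the two-scale limit of $p_{\varepsilon }$ that the paper's version of the computation stores in (\ref{eq3.15}) and reuses to prove Theorem \ref{th3.4}. Two smaller remarks: your treatment of the initial condition via weak continuity of the trace $\mathbf{u}\mapsto \mathbf{u}\left( 0\right) $ on $\mathcal{W}\left( 0,T\right) $ is a legitimate shortcut for the paper's integration-by-parts argument, and your explicit two-scale derivation of $div\mathbf{u}_{0}=0$ and $div_{y}\mathbf{u}_{1}=0$ supplies detail the paper omits. Finally, the claim that $\varepsilon \mathbf{\psi }_{1}^{\varepsilon }\rightarrow 0$ in $L^{2}\left( 0,T;V\right) $ should be stated as weak convergence to $0$ in $L^{2}\left( 0,T;H_{0}^{1}\left( \Omega \right) ^{N}\right) $ (the gradient contains the non-vanishing term $\left( \nabla _{y}\mathbf{\psi }_{1}\right) ^{\varepsilon }$, which only tends to $0$ weakly, and $\varepsilon \mathbf{\psi }_{1}^{\varepsilon }$ is not divergence-free); weak convergence is all that is needed against the fixed $\mathbf{f}\in L^{2}\left( 0,T;H^{-1}\left( \Omega \right) ^{N}\right) $, so this is an imprecision of wording rather than a gap.
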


\begin{proof}
By Proposition \ref{pr2.2}, we see that the sequences $\left( p_{\varepsilon
}\right) _{0<\varepsilon <1}$ and $\left( \mathbf{u}_{\varepsilon }\right)
_{0<\varepsilon <1}=\left( u_{\varepsilon }^{1},...,u_{\varepsilon
}^{N}\right) _{0<\varepsilon <1}$ are bounded respectively in $L^{2}\left(
Q\right) $ and $\mathcal{W}\left( 0,T\right) $. Further, it follows from (%
\ref{eq2.5}) and (\ref{eq2.6}) that for $1\leq k\leq N$, the sequence $%
\left( u_{\varepsilon }^{k}\right) _{0<\varepsilon <1}$ is bounded in $%
\mathcal{Y}\left( 0,T\right) $. Let $E$ be a fundamental sequence. Then, by
Theorems \ref{th3.1}-\ref{th3.2} and the fact that $\mathcal{W}\left(
0,T\right) $ is compactly embedded in $L^{2}\left( Q\right) ^{N}$, there
exist a subsequence $E^{\prime }$ extracted from $E$ and functions $\mathbf{u%
}_{0}=\left( u_{0}^{k}\right) _{1\leq k\leq N}\in \mathcal{W}\left(
0,T\right) $, $\mathbf{u}_{1}=\left( u_{1}^{k}\right) _{1\leq k\leq N}\in
L^{2}\left( Q;L_{per}^{2}\left( Z;H_{\#}^{1}\left( Y;\mathbb{R}\right)
^{N}\right) \right) $, and $p\in L^{2}\left( Q;L_{per}^{2}\left( Y\times Z;%
\mathbb{R}\right) \right) $ such that as $E^{\prime }\ni \varepsilon
\rightarrow 0$, we have (\ref{eq3.7})-(\ref{eq3.8}) and 
\begin{equation}
\mathbf{u}_{\varepsilon }\rightarrow \mathbf{u}_{0}\text{ in }L^{2}\left(
Q\right) ^{N}\text{-strong,}  \label{eq3.9}
\end{equation}%
\begin{equation}
p_{\varepsilon }\rightarrow p\text{ in }L^{2}\left( Q\right) \text{-weak }2%
\text{-}s\text{.}  \label{eq3.10}
\end{equation}%
But, by virtue of Lemma \ref{lem3.1}, the theorem will be entirely proved if
we show that $\mathbf{u=}\left( \mathbf{u}_{0},\mathbf{u}_{1}\right) $
verifies (\ref{eq3.2}). In fact, according to (\ref{eq1.4}), we have $div%
\mathbf{u}_{0}=0$ and $div_{y}\mathbf{u}_{1}=0$. Therefore $\mathbf{u=}%
\left( \mathbf{u}_{0},\mathbf{u}_{1}\right) \in \mathbb{F}_{0}^{1}$. Let us
recall that $\mathbf{u}_{0}$ can be considered as a continuous function of $%
\left[ 0,T\right] $ into $H$ since $\mathcal{W}\left( 0,T\right) $ is
continuously embedded in $\mathcal{C}\left( \left[ 0,T\right] ;H\right) $.
Let us show that $\mathbf{u}_{0}\left( 0\right) =0$. For $\mathbf{v}\in V$
and $\varphi \in \mathcal{C}^{1}\left( \left[ 0,T\right] \right) $ with $%
\varphi \left( T\right) =0$ and $\varphi \left( 0\right) =1$, we have by an
integration by part 
\begin{equation*}
\int_{0}^{T}\left( \mathbf{u}_{\varepsilon }^{\prime }\left( t\right) ,%
\mathbf{v}\right) \varphi \left( t\right) dt+\int_{0}^{T}\left( \mathbf{u}%
_{\varepsilon }\left( t\right) ,\mathbf{v}\right) \varphi ^{\prime }\left(
t\right) dt=-\left( \mathbf{u}_{\varepsilon }\left( 0\right) ,\mathbf{v}%
\right) \text{.}
\end{equation*}%
According to (\ref{eq1.6}), we have by passing to the limit in the preceding
equality as $E^{\prime }\ni \varepsilon \rightarrow 0$%
\begin{equation*}
\int_{0}^{T}\left( \mathbf{u}_{0}^{\prime }\left( t\right) ,\mathbf{v}%
\right) \varphi \left( t\right) dt+\int_{0}^{T}\left( \mathbf{u}_{0}\left(
t\right) ,\mathbf{v}\right) \varphi ^{\prime }\left( t\right) dt=0\text{.}
\end{equation*}%
Hence $\left( \mathbf{u}_{0}\left( 0\right) ,\mathbf{v}\right) =0$ for all $%
\mathbf{v}\in V$, and as $V$ is dense in $H$ we conclude that $\mathbf{u}%
_{0}\left( 0\right) =0$. Now, let us check that $\mathbf{u=}\left( \mathbf{u}%
_{0},\mathbf{u}_{1}\right) $ verifies the variational equation of (\ref%
{eq3.2}). For $0<\varepsilon <1$, let%
\begin{equation}
\begin{array}{c}
\mathbf{\Phi }_{\varepsilon }=\mathbf{\psi }_{0}+\varepsilon \mathbf{\psi }%
_{1}^{\varepsilon }\text{ with }\mathbf{\psi }_{0}\in \mathcal{D}\left( Q;%
\mathbb{R}\right) ^{N}\text{ and } \\ 
\mathbf{\psi }_{1}\in \mathcal{D}\left( Q;\mathbb{R}\right) \otimes \left[ 
\mathcal{C}_{per}^{\infty }\left( Z;\mathbb{R}\right) \otimes \mathcal{V}_{Y}%
\right] \text{,}%
\end{array}
\label{eq3.11}
\end{equation}%
i.e., $\mathbf{\Phi }_{\varepsilon }\left( x,t\right) =\mathbf{\psi }%
_{0}\left( x,t\right) +\varepsilon \mathbf{\psi }_{1}\left( x,t,\frac{x}{%
\varepsilon },\frac{t}{\varepsilon }\right) $ for $\left( x,t\right) \in Q$.
We have $\mathbf{\Phi }_{\varepsilon }\in \mathcal{D}\left( Q;\mathbb{R}%
\right) ^{N}$. Thus, multiplying (\ref{eq1.3}) by $\mathbf{\Phi }%
_{\varepsilon }$ yields 
\begin{equation}
\begin{array}{c}
\int_{0}^{T}\left( \mathbf{u}_{\varepsilon }^{\prime }\left( t\right) ,%
\mathbf{\Phi }_{\varepsilon }\left( t\right) \right)
dt+\int_{0}^{T}a^{\varepsilon }\left( \mathbf{u}_{\varepsilon }\left(
t\right) ,\mathbf{\Phi }_{\varepsilon }\left( t\right) \right) dt \\ 
-\int_{Q}p_{\varepsilon }div\mathbf{\Phi }_{\varepsilon
}dxdt=\int_{0}^{T}\left( \mathbf{f}\left( t\right) ,\mathbf{\Phi }%
_{\varepsilon }\left( t\right) \right) dt\text{.}%
\end{array}
\label{eq3.12}
\end{equation}%
Let us note at once that 
\begin{equation*}
\int_{0}^{T}\left( \mathbf{u}_{\varepsilon }^{\prime }\left( t\right) ,%
\mathbf{\Phi }_{\varepsilon }\left( t\right) \right)
dt=-\sum_{l=1}^{N}\int_{Q}u_{\varepsilon }^{l}\left[ \frac{\partial \psi
_{0}^{l}}{\partial t}+\varepsilon \left( \frac{\partial \psi _{1}^{l}}{%
\partial t}\right) ^{\varepsilon }+\left( \frac{\partial \psi _{1}^{l}}{%
\partial \tau }\right) ^{\varepsilon }\right] dxdt\text{.}
\end{equation*}%
Then by virtue of (\ref{eq3.9}) we have%
\begin{equation}
\int_{0}^{T}\left( \mathbf{u}_{\varepsilon }^{\prime }\left( t\right) ,%
\mathbf{\Phi }_{\varepsilon }\left( t\right) \right) dt\rightarrow
-\sum_{l=1}^{N}\int_{Q}u_{0}^{l}\frac{\partial \psi _{0}^{l}}{\partial t}%
dxdt=\int_{0}^{T}\left( \mathbf{u}_{0}^{\prime }\left( t\right) ,\mathbf{%
\psi }_{0}\left( t\right) \right) dt  \label{eq3.13}
\end{equation}%
as $E^{\prime }\ni \varepsilon \rightarrow 0$. In fact, on one hand 
\begin{equation*}
\sum_{l=1}^{N}\int_{Q}u_{\varepsilon }^{l}\left[ \frac{\partial \psi _{0}^{l}%
}{\partial t}+\varepsilon \left( \frac{\partial \psi _{1}^{l}}{\partial t}%
\right) ^{\varepsilon }+\left( \frac{\partial \psi _{1}^{l}}{\partial \tau }%
\right) ^{\varepsilon }\right] dxdt
\end{equation*}%
\begin{equation*}
\rightarrow \sum_{l=1}^{N}\left[ \int_{Q}u_{0}^{l}\frac{\partial \psi
_{0}^{l}}{\partial t}dxdt+\int \int \int_{Q\times Y\times Z}u_{0}^{l}\frac{%
\partial \psi _{1}^{l}}{\partial \tau }dxdtdyd\tau \right]
\end{equation*}%
as $E^{\prime }\ni \varepsilon \rightarrow 0$, on the other hand

$\int \int \int_{Q\times Y\times Z}u_{0}^{l}\frac{\partial \psi _{1}^{l}}{%
\partial \tau }dxdtdyd\tau =\int_{Q}u_{0}^{l}\left( \int \int_{Y\times Z}%
\frac{\partial \psi _{1}^{l}}{\partial \tau }dyd\tau \right) dxdt=0$ by
virtue of the $Y\times Z$-periodicity. The next point is to pass to the
limit in (\ref{eq3.12}) as $E^{\prime }\ni \varepsilon \rightarrow 0$. To
this end, we note that as $E^{\prime }\ni \varepsilon \rightarrow 0$, 
\begin{equation*}
\int_{0}^{T}a^{\varepsilon }\left( \mathbf{u}_{\varepsilon }\left( t\right) ,%
\mathbf{\Phi }_{\varepsilon }\left( t\right) \right) dt\rightarrow
\int_{0}^{T}\widehat{a}_{\Omega }\left( \mathbf{u}\left( t\right) ,\mathbf{%
\Phi }\left( t\right) \right) dt\text{,}
\end{equation*}%
where $\mathbf{\Phi }=\left( \mathbf{\psi }_{0},\mathbf{\psi }_{1}\right) $
(proceed as in the proof of the analogous result in \cite[p.179]{bib13}).
Now, based on (\ref{eq3.10}), there is no difficulty in showing that as $%
E^{\prime }\ni \varepsilon \rightarrow 0$,%
\begin{equation*}
\int_{Q}p_{\varepsilon }div\mathbf{\Phi }_{\varepsilon }dxdt\rightarrow \int
\int \int_{Q\times Y\times Z}pdiv\mathbf{\psi }_{0}dxdtdyd\tau \text{.}
\end{equation*}%
On the other hand, let us check that as $\varepsilon \rightarrow 0$%
\begin{equation}
\int_{0}^{T}\left( \mathbf{f}\left( t\right) ,\mathbf{\Phi }_{\varepsilon
}\left( t\right) \right) dt\rightarrow \int_{0}^{T}\left( \mathbf{f}\left(
t\right) ,\mathbf{\psi }_{0}\left( t\right) \right) dt\text{.}
\label{eq3.14}
\end{equation}%
Indeed, if $\mathbf{f}\in L^{2}\left( 0,T;L^{2}\left( \Omega ;\mathbb{R}%
\right) ^{N}\right) $ (\ref{eq3.14}) is immediate by using the classical
fact that $\mathbf{\Phi }_{\varepsilon }\rightarrow \mathbf{\psi }_{0}$ in $%
L^{2}\left( Q\right) ^{N}$-weak and $\frac{\partial \mathbf{\Phi }%
_{\varepsilon }}{\partial x_{j}}\rightarrow \frac{\partial \mathbf{\psi }_{0}%
}{\partial x_{j}}$ in $L^{2}\left( Q\right) ^{N}$-weak $\left( 1\leq j\leq
N\right) $ as $\varepsilon \rightarrow 0$. In the general case, (\ref{eq3.14}%
) follows by the density of $L^{2}\left( 0,T;L^{2}\left( \Omega ;\mathbb{R}%
\right) ^{N}\right) $ in $L^{2}\left( 0,T;H^{-1}\left( \Omega ;\mathbb{R}%
\right) ^{N}\right) $.

Having made this point, we can pass to the limit in (\ref{eq3.12}) when $%
E^{\prime }\ni \varepsilon \rightarrow 0$, and the result is that%
\begin{equation}
\begin{array}{c}
\int_{0}^{T}\left( \mathbf{u}_{0}^{\prime }\left( t\right) ,\mathbf{\psi }%
_{0}\left( t\right) \right) dt+\int_{0}^{T}\widehat{a}_{\Omega }\left( 
\mathbf{u}\left( t\right) ,\mathbf{\Phi }\left( t\right) \right) dt \\ 
-\int_{Q}p_{0}div\mathbf{\psi }_{0}dxdt=\int_{0}^{T}\left( \mathbf{f}\left(
t\right) ,\mathbf{\psi }_{0}\left( t\right) \right) dt\text{,}%
\end{array}
\label{eq3.15}
\end{equation}%
where $p_{0}$ denotes the mean of $p$, i.e., $p_{0}\in L^{2}\left(
0,T;L^{2}\left( \Omega ;\mathbb{R}\right) \right) $ and $p_{0}\left(
x,t\right) =\int \int_{Y\times Z}p\left( x,t,y,\tau \right) dyd\tau $ a.e.
in $\left( x,t\right) \in Q$, and where $\mathbf{\Phi }=\left( \mathbf{\psi }%
_{0},\mathbf{\psi }_{1}\right) $, $\mathbf{\psi }_{0}$ ranging over $%
\mathcal{D}\left( Q;\mathbb{R}\right) ^{N}$ and $\mathbf{\psi }_{1}$ ranging
over $\mathcal{D}\left( Q;\mathbb{R}\right) \otimes \left[ \mathcal{C}%
_{per}^{\infty }\left( Z;\mathbb{R}\right) \otimes \mathcal{V}_{Y}\right] $.
Taking in particular $\mathbf{\psi }_{0}$ in $\mathcal{D}\left( 0,T;\mathcal{%
V}\right) $ and using the density of $\mathbf{\tciFourier }_{0}^{\infty }$
in $\mathbb{F}_{0}^{1}$, one quickly arrives at (\ref{eq3.2}). The unicity
of $\mathbf{u=}\left( \mathbf{u}_{0},\mathbf{u}_{1}\right) $ follows by
Lemma \ref{lem3.1}. Consequently, (\ref{eq3.7}) and (\ref{eq3.8}) still hold
when $E\ni \varepsilon \rightarrow 0$. Hence when $0<\varepsilon \rightarrow
0$, by virtue of the arbitrariness of $E$. The theorem is proved.
\end{proof}

Now, we wish to give a simple representation of the vector function $\mathbf{%
u}_{1}$ in Theorem \ref{th3.3} for further uses. For this purpose we
introduce the bilinear form $\widehat{a}$ on $L_{per}^{2}\left(
Z;V_{Y}\right) \times L_{per}^{2}\left( Z;V_{Y}\right) $ defined by%
\begin{equation*}
\widehat{a}\left( \mathbf{u},\mathbf{v}\right) =\sum_{i,j,k=1}^{N}\int
\int_{Y\times Z}a_{ij}\frac{\partial u^{k}}{\partial y_{j}}\frac{\partial
v^{k}}{\partial y_{i}}dyd\tau
\end{equation*}%
for $\mathbf{u=}\left( u^{k}\right) $ and $\mathbf{v=}\left( v^{k}\right)
\in L_{per}^{2}\left( Z;V_{Y}\right) $. Next, for each pair of indices $%
1\leq i,k\leq N$, we consider the variational problem%
\begin{equation}
\left\{ 
\begin{array}{c}
\mathbf{\chi }_{ik}\in L_{per}^{2}\left( Z;V_{Y}\right) :\qquad \qquad \qquad
\\ 
\widehat{a}\left( \mathbf{\chi }_{ik},\mathbf{w}\right)
=\sum_{l=1}^{N}\int_{Y\times Z}a_{li}\frac{\partial w^{k}}{\partial y_{l}}%
dyd\tau \\ 
\text{for all }\mathbf{w}=\left( w^{j}\right) \in L_{per}^{2}\left(
Z;V_{Y}\right) \text{,}%
\end{array}%
\right.  \label{eq3.16}
\end{equation}%
which determines $\mathbf{\chi }_{ik}$ in a unique manner.

\begin{lemma}
\label{lem3.2} Under the hypotheses and notation of Theorem \ref{th3.3}, we
have%
\begin{equation}
\mathbf{u}_{1}\left( x,t,y,\tau \right) =-\sum_{i,k=1}^{N}\frac{\partial
u_{0}^{k}}{\partial x_{i}}\left( x,t\right) \mathbf{\chi }_{ik}\left( y,\tau
\right)  \label{eq3.17}
\end{equation}%
almost everywhere in $\left( x,t,y,\tau \right) \in Q\times Y\times Z$.
\end{lemma}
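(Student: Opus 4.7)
The plan is to test (\ref{eq3.2}) with functions of the form $\mathbf{v}=(0,\mathbf{v}_{1})$ in order to isolate the dependence on the corrector $\mathbf{u}_{1}$, and then to recognise the resulting cell equation as a linear combination of the problems defining $\boldsymbol{\chi}_{ik}$. First I would fix $\varphi \in \mathcal{D}(Q;\mathbb{R})$ and $\mathbf{w} \in L^{2}_{per}(Z;V_{Y})$, and take $\mathbf{v}_{0}=0$, $\mathbf{v}_{1}(x,t,y,\tau)=\varphi(x,t)\mathbf{w}(y,\tau)$ in (\ref{eq3.2}). The time derivative term and the force term then drop out, and what remains is
\begin{equation*}
\int_{0}^{T}\!\!\!\int_{\Omega}\varphi(x,t)\sum_{i,j,k=1}^{N}\!\int\!\!\int_{Y\times Z}\!a_{ij}(y)\!\left(\frac{\partial u_{0}^{k}}{\partial x_{j}}(x,t)+\frac{\partial u_{1}^{k}}{\partial y_{j}}(x,t,y,\tau)\right)\!\frac{\partial w^{k}}{\partial y_{i}}(y,\tau)\,dyd\tau \,dxdt=0.
\end{equation*}
By the arbitrariness of $\varphi \in \mathcal{D}(Q)$, this yields for almost every $(x,t)\in Q$ the identity
\begin{equation*}
\sum_{i,j,k=1}^{N}\int\!\!\int_{Y\times Z}a_{ij}\frac{\partial u_{1}^{k}}{\partial y_{j}}\frac{\partial w^{k}}{\partial y_{i}}\,dyd\tau =-\sum_{i,j,k=1}^{N}\frac{\partial u_{0}^{k}}{\partial x_{j}}(x,t)\int\!\!\int_{Y\times Z}a_{ij}\frac{\partial w^{k}}{\partial y_{i}}\,dyd\tau
\end{equation*}
valid for every $\mathbf{w}=(w^{k})\in L^{2}_{per}(Z;V_{Y})$ (separable tensor products being dense in $L^{2}(Q;L^{2}_{per}(Z;V_{Y}))$).

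Next I would rewrite both sides in terms of the form $\widehat{a}$. The left-hand side is exactly $\widehat{a}(\mathbf{u}_{1}(x,t,\cdot,\cdot),\mathbf{w})$. For the right-hand side, I relabel the dummy indices (swap $i$ and $j$, rename $i$ as $l$) to recognise the expression $\sum_{l=1}^{N}\int_{Y\times Z}a_{li}\,\partial w^{k}/\partial y_{l}\,dyd\tau$ appearing in the definition (\ref{eq3.16}) of $\boldsymbol{\chi}_{ik}$. Thus the identity becomes
\begin{equation*}
\widehat{a}\bigl(\mathbf{u}_{1}(x,t,\cdot,\cdot),\mathbf{w}\bigr)=-\sum_{i,k=1}^{N}\frac{\partial u_{0}^{k}}{\partial x_{i}}(x,t)\,\widehat{a}(\boldsymbol{\chi}_{ik},\mathbf{w})=\widehat{a}\!\left(-\sum_{i,k=1}^{N}\frac{\partial u_{0}^{k}}{\partial x_{i}}(x,t)\boldsymbol{\chi}_{ik},\mathbf{w}\right)
\end{equation*}
for every $\mathbf{w}\in L^{2}_{per}(Z;V_{Y})$.

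Finally, I would invoke uniqueness for the abstract variational problem on $L^{2}_{per}(Z;V_{Y})$: by (\ref{eq1.2}) the form $\widehat{a}$ is coercive on $L^{2}_{per}(Z;V_{Y})$ (its coercivity constant is $\alpha$, since the norm on $V_{Y}$ is the gradient norm and the $Y$-mean-value condition in $H^{1}_{\#}(Y)$ makes this a genuine norm). By Lax--Milgram this determines its argument uniquely, whence (\ref{eq3.17}) follows for almost every $(x,t)\in Q$. The only step requiring some care is the localisation in $(x,t)$, i.e.\ the density of the tensor products $\varphi\otimes\mathbf{w}$ in $L^{2}(Q;L^{2}_{per}(Z;V_{Y}))$; everything else is a matter of index bookkeeping and the uniqueness statement for (\ref{eq3.16}).
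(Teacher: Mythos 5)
Your proposal is correct and follows essentially the same route as the paper: test (\ref{eq3.2}) with $\mathbf{v}=(0,\varphi\otimes\mathbf{w})$, localise in $(x,t)$ to obtain the cell equation (\ref{eq3.18}), observe that $-\sum_{i,k}\frac{\partial u_{0}^{k}}{\partial x_{i}}(x,t)\,\boldsymbol{\chi}_{ik}$ solves the same equation by linearity and (\ref{eq3.16}), and conclude by uniqueness. Your explicit remarks on the coercivity of $\widehat{a}$ and on the density of tensor products only make explicit what the paper leaves implicit.
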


\begin{proof}
In (\ref{eq3.2}), we choose the test functions $\mathbf{v}=\left( \mathbf{v}%
_{0},\mathbf{v}_{1}\right) $ such that $\mathbf{v}_{0}=0$ and $\mathbf{v}%
_{1}\left( x,t,y,\tau \right) =\varphi \left( x,t\right) \mathbf{w}\left(
y,\tau \right) $ for $\left( x,t,y,\tau \right) \in Q\times Y\times Z$,
where $\varphi \in \mathcal{D}\left( Q;\mathbb{R}\right) $ and $\mathbf{w}%
\in L_{per}^{2}\left( Z;V_{Y}\right) $. Then for almost every $\left(
x,t\right) $ in $Q$, we have%
\begin{equation}
\left\{ 
\begin{array}{c}
\widehat{a}\left( \mathbf{u}_{1}\left( x,t\right) ,\mathbf{w}\right)
=-\sum_{l,j,k=1}^{N}\frac{\partial u_{0}^{k}}{\partial x_{j}}\left(
x,t\right) \int \int_{Y\times Z}a_{lj}\frac{\partial w^{k}}{\partial y_{l}}%
dyd\tau \\ 
\text{for all }\mathbf{w}\in L_{per}^{2}\left( Z;V_{Y}\right) \text{.}\qquad
\qquad \qquad \qquad \qquad \qquad%
\end{array}%
\right.  \label{eq3.18}
\end{equation}%
But it is clear that $\mathbf{u}_{1}\left( x,t\right) $ (for fixed $\left(
x,t\right) \in Q$) is the unique function in $L_{per}^{2}\left(
Z;V_{Y}\right) $ solving the variational equation (\ref{eq3.18}). On the
other hand, it is an easy exercise to verify that $\mathbf{z}\left(
x,t\right) =-\sum_{i,k=1}^{N}\frac{\partial u_{0}^{k}}{\partial x_{i}}\left(
x,t\right) \mathbf{\chi }_{ik}$ solves also (\ref{eq3.18}). Hence the lemma
follows immediately.
\end{proof}

\subsection{Macroscopic homogenized equations}

Our aim here is to derive a well-posed initial boundary value problem for $%
\left( \mathbf{u}_{0},p_{0}\right) $. To begin, for $1\leq i,j,k,h\leq N$,
let%
\begin{equation*}
q_{ijkh}=\delta _{kh}\int_{Y}a_{ij}\left( y\right) dy-\sum_{l=1}^{N}\int
\int_{Y\times Z}a_{il}\left( y\right) \frac{\partial \mathcal{\chi }_{jh}^{k}%
}{\partial y_{l}}\left( y,\tau \right) dyd\tau \text{,}
\end{equation*}%
where: $\delta _{kh}$ is the Kronecker symbol, $\mathbf{\chi }_{jh}=\left( 
\mathcal{\chi }_{jh}^{k}\right) $ is defined by (\ref{eq3.16}). To the
coefficients $q_{ijkh}$ we associate the differential operator $\mathcal{Q}$
on $Q$ mapping $\mathcal{D}^{\prime }\left( Q\right) ^{N}$ into $\mathcal{D}%
^{\prime }\left( Q\right) ^{N}$ ($\mathcal{D}^{\prime }\left( Q\right) $
being the usual space of complex distributions on $Q$) as 
\begin{equation}
(\mathcal{Q}\mathbf{z})^{k}=-\sum_{i,j,h=1}^{N}q_{ijkh}\frac{\partial
^{2}z^{h}}{\partial x_{i}\partial x_{j}}\quad \left( 1\leq k\leq N\right) 
\text{ for }\mathbf{z}=\left( z^{h}\right) \text{, }z^{h}\in \mathcal{D}%
^{\prime }\left( Q\right) \text{.}  \label{eq3.19}
\end{equation}%
$\mathcal{Q}$ is the so-called homogenized operator associated to $%
P^{\varepsilon }$ $\left( 0<\varepsilon <1\right) $.

Now, let us consider the initial boundary value\ problem 
\begin{equation}
\frac{\partial \mathbf{u}_{0}}{\partial t}+\mathcal{Q}\mathbf{u}_{0}+\mathbf{%
grad}p_{0}=\mathbf{f}\text{ in }Q=\Omega \times ]0,T[\text{,}  \label{eq3.20}
\end{equation}%
\begin{equation}
div\mathbf{u}_{0}=0\text{ in }Q\text{,}  \label{eq3.21}
\end{equation}%
\begin{equation}
\mathbf{u}_{0}=0\text{ on }\partial \Omega \times ]0,T[\text{,}
\label{eq3.22}
\end{equation}%
\begin{equation}
\mathbf{u}_{0}\left( 0\right) =0\text{ in }\Omega \text{.}  \label{eq3.23}
\end{equation}

\begin{lemma}
\label{lem3.3} The initial boundary value problem (\ref{eq3.20})-(\ref%
{eq3.23}) admits at most one weak solution $\left( \mathbf{u}%
_{0},p_{0}\right) $ with

$\mathbf{u}_{0}\in \mathcal{W}\left( 0,T\right) $ and $p_{0}\in L^{2}\left(
0,T;L^{2}\left( \Omega ;\mathbb{R}\right) \mathfrak{/}\mathbb{R}\right) $.
\end{lemma}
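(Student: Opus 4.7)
The plan is a linearity-plus-energy argument. Let $(\mathbf{u}_0^{(1)}, p_0^{(1)})$ and $(\mathbf{u}_0^{(2)}, p_0^{(2)})$ be two weak solutions in the stated class and set $\mathbf{u}_0 = \mathbf{u}_0^{(1)} - \mathbf{u}_0^{(2)}$, $p_0 = p_0^{(1)} - p_0^{(2)}$. Then $\mathbf{u}_0 \in \mathcal{W}(0,T)$, $div\,\mathbf{u}_0 = 0$, $\mathbf{u}_0(0) = 0$, and $\partial_t \mathbf{u}_0 + \mathcal{Q}\mathbf{u}_0 + \mathbf{grad}\, p_0 = 0$ in $\mathcal{D}'(Q)^N$. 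Introducing the bilinear form
\[
b(\mathbf{u}, \mathbf{v}) = \sum_{i,j,k,h=1}^N q_{ijkh} \int_\Omega \frac{\partial u^h}{\partial x_j} \frac{\partial v^k}{\partial x_i}\, dx
\]
associated with $\mathcal{Q}$ via (\ref{eq3.19}), pairing the equation against any $\mathbf{v} \in V$ kills the pressure term (since $div\,\mathbf{v} = 0$ and $\mathbf{v}$ vanishes on $\partial\Omega$), yielding $(\mathbf{u}_0'(t), \mathbf{v}) + b(\mathbf{u}_0(t), \mathbf{v}) = 0$ for almost every $t \in (0,T)$.

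Specializing $\mathbf{v} = \mathbf{u}_0(t)$ and using $\frac{d}{dt}|\mathbf{u}_0|^2 = 2(\mathbf{u}_0', \mathbf{u}_0)$ (valid since $\mathbf{u}_0 \in \mathcal{W}(0,T)$, see \cite{bib18}) gives $\frac{1}{2}\frac{d}{dt}|\mathbf{u}_0(t)|^2 + b(\mathbf{u}_0(t), \mathbf{u}_0(t)) = 0$. Everything then comes down to proving that $b$ is non-negative on $V$, in fact coercive, so that the inequality $\frac{d}{dt}|\mathbf{u}_0(t)|^2 \leq 0$ together with $\mathbf{u}_0(0) = 0$ forces $\mathbf{u}_0 \equiv 0$ in $\mathcal{C}([0,T];L^2(\Omega)^N)$. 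Once $\mathbf{u}_0 = 0$ is secured, the original equation reduces to $\mathbf{grad}\, p_0 = 0$ in $\mathcal{D}'(Q)$, so $p_0(\cdot, t)$ is spatially constant for almost every $t$; since it has zero mean by hypothesis, $p_0 \equiv 0$ and uniqueness follows.

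The main obstacle is therefore establishing the coercivity of $b$, and this is where I would use the correctors introduced in the previous subsection. For a fixed $\mathbf{w} \in V$, define the corrector lift $\mathbf{w}_1 \in L^2(\Omega; L^2_{per}(Z; V_Y))^N$ by the formula from Lemma \ref{lem3.2}, namely
\[
w_1^k(x, y, \tau) = -\sum_{i,h=1}^N \frac{\partial w^h}{\partial x_i}(x)\, \chi_{ih}^k(y, \tau).
\]
A direct expansion, using the definition of $q_{ijkh}$ together with the corrector equation (\ref{eq3.16}) and the symmetry $a_{ij} = a_{ji}$, shows on the one hand that $\widehat{a}_\Omega((\mathbf{w}, \mathbf{w}_1), (0, \mathbf{v}_1)) = 0$ for every admissible $\mathbf{v}_1$, and on the other that
\[
b(\mathbf{w}, \mathbf{w}) = \widehat{a}_\Omega\bigl((\mathbf{w}, \mathbf{w}_1), (\mathbf{w}, \mathbf{w}_1)\bigr).
\]
Invoking the $\mathbb{U}$-ellipticity (\ref{eq3.6}) then gives $b(\mathbf{w}, \mathbf{w}) \geq \alpha \|(\mathbf{w}, \mathbf{w}_1)\|_{\mathbb{U}}^2 \geq \alpha \|\mathbf{w}\|_V^2$, which is the coercivity required to close the argument. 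The only genuinely delicate point is the bookkeeping identifying $b$ with $\widehat{a}_\Omega$ evaluated on the corrector extension; this is the standard algebra of periodic homogenization and is a direct consequence of the definition of $q_{ijkh}$ and of (\ref{eq3.16}).
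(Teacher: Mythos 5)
Your argument is correct, and it rests on exactly the same algebraic core as the paper's, but it is organized differently. The paper does not run the energy estimate at the macroscopic level: it shows that any weak solution $\left(\mathbf{u}_{0},p_{0}\right)$ of (\ref{eq3.20})--(\ref{eq3.23}), once augmented with the corrector $\mathbf{u}_{1}$ from (\ref{eq3.17}), satisfies the two-scale variational problem (\ref{eq3.2}) --- this is the passage from (\ref{eq3.24}) to (\ref{eq3.25}), which uses precisely your identity $b(\mathbf{w},\mathbf{v})=\widehat{a}_{\Omega}\left(\left(\mathbf{w},\mathbf{w}_{1}\right),\left(\mathbf{v},\mathbf{v}_{1}\right)\right)$ together with the fact that the cell problem (\ref{eq3.18}) kills the $\mathbf{v}_{1}$-contribution --- and then quotes the uniqueness of (\ref{eq3.2}) already established in Lemma \ref{lem3.1}. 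You instead absorb the two-scale structure into a coercivity statement for the homogenized form $b$ on $V$ and redo the Gronwall-type estimate directly; that coercivity claim is exactly the ellipticity of $\mathcal{Q}$ which the paper states separately as Remark \ref{rem3.1}. What the paper's route buys is economy (the energy argument is done once, in Lemma \ref{lem3.1}, and reused); what your route buys is a self-contained macroscopic proof that does not presuppose the two-scale uniqueness lemma, plus an explicit treatment of the pressure: your final step ($\mathbf{grad}\,p_{0}=0$ and zero mean force $p_{0}=0$) is left implicit in the paper, so on that point your write-up is actually more complete.
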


\begin{proof}
If $\left( \mathbf{u}_{0},p_{0}\right) \in \mathcal{W}\left( 0,T\right)
\times L^{2}\left( 0,T;L^{2}\left( \Omega ;\mathbb{R}\right) \right) $
verifies (\ref{eq3.20})-(\ref{eq3.23}), then we have%
\begin{equation*}
\begin{array}{c}
\int_{0}^{T}\left( \mathbf{u}_{0}^{\prime }\left( t\right) ,\mathbf{v}%
_{0}\left( t\right) \right) dt+\sum_{i,j,k,h=1}^{N}\int_{Q}q_{ijkh}\frac{%
\partial u_{0}^{h}}{\partial x_{j}}\frac{\partial v_{0}^{k}}{\partial x_{i}}%
dxdt \\ 
=\int_{0}^{T}\left( \mathbf{f}\left( t\right) ,\mathbf{v}_{0}\left( t\right)
\right) dt%
\end{array}%
\end{equation*}%
for all $\mathbf{v}_{0}\in L^{2}\left( 0,T;V\right) $. From the previous
equality, one quickly arrives at 
\begin{equation}
\begin{array}{c}
\int_{0}^{T}\left( \mathbf{u}_{0}^{\prime }\left( t\right) ,\mathbf{v}%
_{0}\left( t\right) \right) dt+\sum_{i,j,k=1}^{N}\int \int \int_{Q\times
Y\times Z}a_{ij}\left( \frac{\partial u_{0}^{k}}{\partial x_{j}}+\frac{%
\partial u_{1}^{k}}{\partial y_{j}}\right) \frac{\partial v_{0}^{k}}{%
\partial x_{i}}dxdtdyd\tau \\ 
=\int_{0}^{T}\left( \mathbf{f}\left( t\right) ,\mathbf{v}_{0}\left( t\right)
\right) dt%
\end{array}
\label{eq3.24}
\end{equation}%
where $u_{1}^{k}\left( x,t,y,\tau \right) =-\sum_{i,h=1}^{N}\frac{\partial
u_{0}^{h}}{\partial x_{i}}\left( x,t\right) \mathcal{\chi }_{ih}^{k}\left(
y,\tau \right) $ for $\left( x,t,y,\tau \right) \in Q\times Y\times Z$. Let
us check that $\mathbf{u}=\left( \mathbf{u}_{0},\mathbf{u}_{1}\right) $
(with $\mathbf{u}_{1}\left( x,t,y,\tau \right) =-\sum_{i,k=1}^{N}\frac{%
\partial u_{0}^{k}}{\partial x_{i}}\left( x,t\right) \mathbf{\chi }%
_{ik}\left( y,\tau \right) $ for $\left( x,t,y,\tau \right) \in Q\times
Y\times Z$) satisfies (\ref{eq3.2}). Indeed, we have 
\begin{equation}
\sum_{i,j,k=1}^{N}\int \int \int_{Q\times Y\times Z}a_{ij}\left( \frac{%
\partial u_{0}^{k}}{\partial x_{j}}+\frac{\partial u_{1}^{k}}{\partial y_{j}}%
\right) \frac{\partial v_{1}^{k}}{\partial y_{i}}dxdtdyd\tau =0
\label{eq3.25}
\end{equation}%
for all $\mathbf{v}_{1}=\left( v_{1}^{k}\right) \in L^{2}\left(
Q;L_{per}^{2}\left( Z;V_{Y}\right) \right) $, since $\mathbf{u}_{1}\left(
x,t\right) $ verifies (\ref{eq3.18}) for $\left( x,t\right) \in Q$. Thus, by
(\ref{eq3.24})-(\ref{eq3.25}), we see that $\mathbf{u}=\left( \mathbf{u}_{0},%
\mathbf{u}_{1}\right) $ verifies (\ref{eq3.2}). Hence, the unicity in (\ref%
{eq3.20})-(\ref{eq3.23}) follows by Lemme \ref{lem3.1}.
\end{proof}

This leads us to the following theorem.

\begin{theorem}
\label{th3.4} Suppose that the hypotheses of Theorem \ref{th3.3} are
satisfied. For each $0<\varepsilon <1$, let $\left( \mathbf{u}_{\varepsilon
},p_{\varepsilon }\right) \in \mathcal{W}\left( 0,T\right) \times
L^{2}\left( 0,T;L^{2}\left( \Omega ;\mathbb{R}\right) \mathfrak{/}\mathbb{R}%
\right) $ be defined by (\ref{eq1.3})-(\ref{eq1.6}). Then, as $\varepsilon
\rightarrow 0$, we have $\mathbf{u}_{\varepsilon }\rightarrow \mathbf{u}_{0}$
in $\mathcal{W}\left( 0,T\right) $-weak and $p_{\varepsilon }\rightarrow
p_{0}$ in $L^{2}\left( 0,T;L^{2}\left( \Omega \right) \right) $-weak, where
the pair $\left( \mathbf{u}_{0},p_{0}\right) $\ lies in $\mathcal{W}\left(
0,T\right) \times L^{2}\left( 0,T;L^{2}\left( \Omega ;\mathbb{R}\right) 
\mathfrak{/}\mathbb{R}\right) $ and is the unique solution of (\ref{eq3.20}%
)-(\ref{eq3.23}).
\end{theorem}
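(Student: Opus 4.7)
The strategy is to identify the homogenized system as a direct consequence of the global homogenization theorem already proved (Theorem \ref{th3.3}) together with the representation formula of Lemma \ref{lem3.2}, and then invoke the uniqueness result of Lemma \ref{lem3.3} to lift convergence of subsequences to convergence of the whole family.

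First, I would fix a fundamental sequence $E\subset (0,1)$ and use estimate (\ref{eq2.7}) from Proposition \ref{pr2.2} to conclude that $(p_{\varepsilon})_{\varepsilon\in E}$ is bounded in $L^{2}(Q)$; Theorem \ref{th3.1} then yields a subsequence $E'$ along which $p_{\varepsilon}\rightarrow p$ in $L^{2}(Q)$-weak $2$-$s$ for some $p\in L^{2}(Q;L_{per}^{2}(Y\times Z))$. By a further extraction if necessary, all conclusions of Theorem \ref{th3.3} also hold along $E'$, producing $\mathbf{u}_{0}\in \mathcal{W}(0,T)$ and $\mathbf{u}_{1}$. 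Setting $p_{0}(x,t)=\int\!\!\int_{Y\times Z}p(x,t,y,\tau)\,dy\,d\tau$ and testing against functions depending only on $(x,t)$ gives $p_{\varepsilon}\rightarrow p_{0}$ in $L^{2}(Q)$-weak; since $\int_{\Omega}p_{\varepsilon}(\cdot,t)\,dx=0$ a.e.\ in $t$, the same holds for $p_{0}$, so $p_{0}\in L^{2}(0,T;L^{2}(\Omega;\mathbb{R})\mathfrak{/}\mathbb{R})$.

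Next, starting from equation (\ref{eq3.15}) obtained in the proof of Theorem \ref{th3.3}, I would restrict to test functions $\mathbf{\Phi}=(\mathbf{\psi}_{0},0)$ with $\mathbf{\psi}_{0}\in \mathcal{D}(Q;\mathbb{R})^{N}$. Substituting the representation (\ref{eq3.17}) for $\mathbf{u}_{1}$ into $\widehat{a}_{\Omega}(\mathbf{u},\mathbf{\Phi})$ and carrying out the $(y,\tau)$-integration produces exactly the constants $q_{ijkh}$ defined before (\ref{eq3.19}); concretely,
\[
\widehat{a}_{\Omega}(\mathbf{u},\mathbf{\psi}_{0})=\sum_{i,j,k,h=1}^{N}q_{ijkh}\int_{Q}\frac{\partial u_{0}^{h}}{\partial x_{j}}\frac{\partial \psi_{0}^{k}}{\partial x_{i}}\,dx\,dt.
\]
Inserting this into (\ref{eq3.15}) and interpreting in $\mathcal{D}'(Q)^{N}$ yields (\ref{eq3.20}). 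The remaining conditions are inherited for free: (\ref{eq3.21}) holds because $\mathbf{u}_{0}\in L^{2}(0,T;V)$, (\ref{eq3.22}) because $\mathbf{u}_{0}\in L^{2}(0,T;H_{0}^{1}(\Omega)^{N})$, and (\ref{eq3.23}) has been checked explicitly inside the proof of Theorem \ref{th3.3}.

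Finally, Lemma \ref{lem3.3} asserts uniqueness of $(\mathbf{u}_{0},p_{0})$, so the limits do not depend on the chosen subsequence $E'$. By the arbitrariness of the fundamental sequence $E$, the convergences $\mathbf{u}_{\varepsilon}\rightarrow \mathbf{u}_{0}$ in $\mathcal{W}(0,T)$-weak and $p_{\varepsilon}\rightarrow p_{0}$ in $L^{2}(0,T;L^{2}(\Omega))$-weak hold along the full family $0<\varepsilon\to 0$, which completes the proof. The main technical obstacle is the correct identification of the coefficients $q_{ijkh}$ after substituting (\ref{eq3.17}): one must exploit both the $Y\times Z$-periodicity to discard boundary terms in the $(y,\tau)$-integration and the fact that $\mathbf{\chi}_{jh}\in L_{per}^{2}(Z;V_{Y})$ is divergence-free, so that no spurious pressure-like contribution survives. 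Once these algebraic manipulations are performed carefully, the passage from (\ref{eq3.2}) to (\ref{eq3.20})--(\ref{eq3.23}) is routine.
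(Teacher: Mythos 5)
Your proposal is correct and follows essentially the same route as the paper: extract a subsequence via the argument of Theorem \ref{th3.3}, identify $p_{0}$ as the mean of the two-scale limit $p$, substitute the representation (\ref{eq3.17}) into (\ref{eq3.15}) with $\mathbf{\psi}_{1}=0$ to recover the coefficients $q_{ijkh}$ and hence (\ref{eq3.20})--(\ref{eq3.23}), and conclude by the uniqueness of Lemma \ref{lem3.3} and the arbitrariness of the fundamental sequence. Your explicit formula for $\widehat{a}_{\Omega}(\mathbf{u},\mathbf{\psi}_{0})$ is exactly the ``simple computation'' the paper leaves implicit, and it checks out against the definition of $q_{ijkh}$.
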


\begin{proof}
Let $E$ be a fundamental sequence. As in the proof of Theorem \ref{th3.3},
there exists a subsequence $E^{\prime }$ extracted from $E$ such that as $%
E^{\prime }\ni \varepsilon \rightarrow 0$, we have (\ref{eq3.7})-(\ref{eq3.8}%
) and (\ref{eq3.10}) with $\mathbf{u=}\left( \mathbf{u}_{0},\mathbf{u}%
_{1}\right) \in \mathbb{F}_{0}^{1}$ and $\mathbf{u}_{0}\left( 0\right) $.
Then, from (\ref{eq3.10}) we have $p_{\varepsilon }\rightarrow p_{0}$ in $%
L^{2}\left( 0,T;L^{2}\left( \Omega \right) \right) $-weak when $E^{\prime
}\ni \varepsilon \rightarrow 0$, where $p_{0}$ is the mean of $p$. Hence, it
follows that $p_{0}\in L^{2}\left( 0,T;L^{2}\left( \Omega ;\mathbb{R}\right) 
\mathfrak{/}\mathbb{R}\right) $. Furher, (\ref{eq3.15}) holds for all $%
\mathbf{\Phi }=\left( \mathbf{\psi }_{0},\mathbf{\psi }_{1}\right) \in 
\mathcal{D}\left( Q;\mathbb{R}\right) ^{N}\times \mathcal{D}\left( Q;\mathbb{%
R}\right) \otimes \left[ \mathcal{C}_{per}^{\infty }\left( Z;\mathbb{R}%
\right) \otimes \mathcal{V}_{Y}\right] $. Then, substituting (\ref{eq3.17})
in (\ref{eq3.15}) and choosing therein the $\mathbf{\Phi }$'s such that $%
\mathbf{\psi }_{1}=0$, a simple computation leads to (\ref{eq3.20}) with
evidently (\ref{eq3.21})-(\ref{eq3.23}). Hence the Theorem follows by Lemma %
\ref{lem3.3} since $E$ is arbitrarily chosen.
\end{proof}

\begin{remark}
\label{rem3.1} The operator $\mathcal{Q}$ is elliptic, i.e., there is some $%
\alpha _{0}>0$ such that%
\begin{equation*}
\sum_{i,j,k,h=1}^{N}q_{ijkh}\xi _{ik}\xi _{jh}\geq \alpha
_{0}\sum_{k,h=1}^{N}\left\vert \xi _{kh}\right\vert ^{2}
\end{equation*}%
for all $\mathbf{\xi =}\left( \xi _{ij}\right) $ with $\xi _{ij}\in \mathbb{R%
}$. Indeed, by following a classical line of argument (see, e.g., \cite{bib2}%
), we can give a suitable expression of $q_{ijkh}$, viz.%
\begin{equation*}
q_{ijkh}=\widehat{a}\left( \mathbf{\chi }_{ik}-\mathbf{\pi }_{ik},\mathbf{%
\chi }_{jh}-\mathbf{\pi }_{jh}\right) \text{,}
\end{equation*}%
where, for each pair of indices $1\leq i,k\leq N$, the vector function $%
\mathbf{\pi }_{ik}=\left( \pi _{ik}^{1},...,\pi _{ik}^{N}\right) :\mathbb{R}%
_{y}^{N}\rightarrow \mathbb{R}$ is given by $\pi _{ik}^{r}\left( y\right)
=y_{i}\delta _{kr}$ $\left( r=1,...,N\right) $ for $y=\left(
y_{1},...,y_{N}\right) \in \mathbb{R}^{N}$. Hence, the above ellipticity
property follows in a classical fashion.
\end{remark}

\end{document}